\title{Small Heegaard genus and $\SU(2)$}
\author[John A. Baldwin]{John A. Baldwin}
\address{Department of Mathematics \\ Boston College}
\email{john.baldwin@bc.edu}
\author[Steven Sivek]{Steven Sivek}
\address{Department of Mathematics\\Imperial College London}
\email{s.sivek@imperial.ac.uk}
\newtheorem*{rep@theorem}{\rep@title}
\newcommand{\newreptheorem}[2]{%
\newenvironment{rep#1}[1]{%
 \def\rep@title{#2 \ref{##1}}%
 \begin{rep@theorem}}%
 {\end{rep@theorem}}}
\newtheorem {theorem}{Theorem}
\newtheorem {lemma}[theorem]{Lemma}
\newtheorem {proposition}[theorem]{Proposition}
\newtheorem {corollary}[theorem]{Corollary}
\numberwithin{equation}{section}
\numberwithin{theorem}{section}
\theoremstyle{definition}
\newtheorem{definition}[theorem]{Definition}
\newtheorem{remark}[theorem]{Remark}
\newtheorem*{remark*}{Remark}
\newlist{pcases}{enumerate}{1}
\setlist[pcases]{
  label=\bf{Case~\arabic*:}\protect\thiscase.~,
  ref=\arabic*,
  align=left,
  labelsep=0pt,
  leftmargin=0pt,
  labelwidth=0pt,
  parsep=0pt
}
\newcommand{\case}[1][]{%
  \if\relax\detokenize{#1}\relax
    \def\thiscase{}%
  \else
    \def\thiscase{~#1}%
  \fi
  \item
}
\newcommand{\Z}{\mathbb{Z}}
\newcommand{\R}{\mathbb{R}}
\newcommand{\C}{\mathbb{C}}
\newcommand{\F}{\mathbb{F}}
\newcommand{\Q}{\mathbb{Q}}
\newcommand{\Hom}{\operatorname{Hom}}
\newcommand{\Img}{\operatorname{Im}}
\newcommand{\sR}{\mathscr{R}}
\newcommand\SU{\mathrm{SU}}
\newcommand\SL{\mathrm{SL}}
\newcommand\SO{\mathrm{SO}}
\newcommand\KHI{\mathit{KHI}}
\newcommand\HFK{\widehat{\mathit{HFK}}}
\DeclareFontFamily{U}{mathx}{\hyphenchar\font45}
\DeclareFontShape{U}{mathx}{m}{n}{
      <5> <6> <7> <8> <9> <10>
      <10.95> <12> <14.4> <17.28> <20.74> <24.88>
      mathx10
      }{}
\DeclareSymbolFont{mathx}{U}{mathx}{m}{n}
\DeclareMathAccent{\widecheck}{0}{mathx}{"71}
\newcommand{\ad}{\operatorname{ad}}
\newcommand{\Ad}{\operatorname{Ad}}
\newcommand{\tr}{\operatorname{tr}}
\newcommand{\dcover}{\Sigma_2}
\newcommand{\su}{\mathfrak{su}}
\newcommand{\lk}{\mathrm{lk}}
\DeclareFontFamily{OMX}{MnSymbolE}{}
\DeclareSymbolFont{MnLargeSymbols}{OMX}{MnSymbolE}{m}{n}
\DeclareFontShape{OMX}{MnSymbolE}{m}{n}{
    <-6>  MnSymbolE5
   <6-7>  MnSymbolE6
   <7-8>  MnSymbolE7
   <8-9>  MnSymbolE8
   <9-10> MnSymbolE9
  <10-12> MnSymbolE10
  <12->   MnSymbolE12
}{}
\DeclareFontShape{OMX}{MnSymbolE}{b}{n}{
    <-6>  MnSymbolE-Bold5
   <6-7>  MnSymbolE-Bold6
   <7-8>  MnSymbolE-Bold7
   <8-9>  MnSymbolE-Bold8
   <9-10> MnSymbolE-Bold9
  <10-12> MnSymbolE-Bold10
  <12->   MnSymbolE-Bold12
}{}
\let\llangle\@undefined
\let\rrangle\@undefined
\DeclareMathDelimiter{\llangle}{\mathopen}%
                     {MnLargeSymbols}{'164}{MnLargeSymbols}{'164}
\DeclareMathDelimiter{\rrangle}{\mathclose}%
                     {MnLargeSymbols}{'171}{MnLargeSymbols}{'171}
\newcounter{desccount}
\newcommand{\descref}[1]{\hyperref[#1]{#1}}
\tikzset{every picture/.style=thick}
\tikzset{link/.style = { white, double = black, line width = 1.75pt, double distance = 1.25pt, looseness=1.75 }}
\tikzset{crossing/.style = {draw, circle, dotted, minimum size=0.5cm, inner sep=0, outer sep=0}}
\pgfplotsset{compat=1.12}
\begin{document}

\begin{abstract}
Let $Y$ be a closed, orientable 3-manifold with Heegaard genus 2.  We prove that if $H_1(Y;\Z)$ has order $1$, $3$, or $5$, then there is a representation $\pi_1(Y) \to \SU(2)$ with non-abelian image.  Similarly, if $H_1(Y;\Z)$ has order $2$ then we find a non-abelian representation $\pi_1(Y) \to \SO(3)$.  We also prove that a knot $K$ in $S^3$ is a trefoil if and only if there is a unique conjugacy class of irreducible representations $\pi_1(S^3\setminus K) \to \SU(2)$ sending a fixed meridian to $\left(\begin{smallmatrix}i&0\\0&-i\end{smallmatrix}\right)$.
\end{abstract}

\maketitle

\section{Introduction}

The instanton Floer homology of a homology 3-sphere $Y$ is generated as a chain complex by conjugacy classes of irreducible representations $\pi_1(Y) \to \SU(2)$, so it is natural to ask whether these exist.  This question has been studied from many different perspectives in recent years, including surgeries on knots \cite{km-su2}, branched double covers \cite{zentner-simple}, Stein fillings \cite{bs-stein}, and incompressible tori \cite{lpcz,bs-splicing}; in every case so far the answer has been yes, unless $Y\cong S^3$.

In this article, we add small Heegaard genus to the list.  If $Y$ has a Heegaard splitting of genus at most 1 then $\pi_1(Y)$ is abelian, so the first interesting case is Heegaard genus 2.  In this case we can apply work of Birman and Hilden \cite{birman-hilden-branched} to identify such manifolds as branched covers of links, and thus reduce the problem to one about representations of link groups.  Our main result is the following.

\begin{theorem} \label{thm:heegaard-genus-2}
Let $Y$ be a rational homology 3-sphere with Heegaard genus 2, and with first homology $H_1(Y;\Z)$ of order $1$, $3$, or $5$.  Then there is a representation $\pi_1(Y) \to \SU(2)$ with non-abelian image.
\end{theorem}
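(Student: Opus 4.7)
The strategy is to reduce the problem via Birman--Hilden to one about $\SU(2)$ representations of a knot group in $S^3$, and then to invoke instanton-theoretic tools to produce such representations.

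First, by Birman--Hilden every closed orientable $3$-manifold with a genus-$2$ Heegaard splitting is the double branched cover $\Sigma_2(L)$ of $S^3$ along some link $L$, so I can write $Y \cong \Sigma_2(L)$. For a link with $\mu$ components the branching forces $H_1(\Sigma_2(L);\Z/2)$ to have dimension at least $\mu - 1$, so $|H_1(Y;\Z)|$ is either infinite or divisible by $2^{\mu - 1}$. The odd-order hypothesis thus forces $\mu = 1$, so $Y \cong \Sigma_2(K)$ for a knot $K$ with $\det(K) = |H_1(Y;\Z)| \in \{1,3,5\}$. Since $Y$ has Heegaard genus exactly $2$ (in particular is not a lens space), $K$ is nontrivial and not $2$-bridge.

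Second, I would translate the target into the following equivalent statement: $\pi_1(S^3 \setminus K)$ admits an irreducible, non-binary-dihedral representation $\rho \colon \pi_1(S^3 \setminus K) \to \SU(2)$ sending a meridian $\mu$ to a trace-zero element. Given such a $\rho$, one has $\rho(\mu)^2 = -1 \in Z(\SU(2))$, so the restriction to the index-$2$ parity subgroup sends a meridian of the preimage knot $\tilde K \subset \Sigma_2(K)$ to $-1$, and projection to $\SO(3)$ yields a non-abelian $\SO(3)$ representation of $\pi_1(\Sigma_2(K))$. This $\SO(3)$ representation lifts uniquely to $\SU(2)$ because $|H_1(Y;\Z)|$ odd gives $H_1(Y;\Z/2) = 0$.

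Third, to produce such $\rho$ I would use framed instanton Floer homology together with classical invariants. Over a characteristic-zero coefficient field one has $\chi(\Is(Y)) = |H_1(Y;\Z)|$, and any strict inequality $\dim \Is(Y) > |H_1(Y;\Z)|$ immediately forces $\pi_1(Y)$ to admit an irreducible $\SU(2)$ representation. Complementary tools include Lin's formula $\lambda_{\mathrm{CL}}(K) = -\tfrac{1}{2} \sigma(K)$ for the signed count of irreducible meridional $\SU(2)$ reps (handling all $K$ with nonzero signature), deformation arguments producing irreducible reps from binary-dihedral ones whenever $\det(K) > 1$ (so for $\det \in \{3,5\}$), and the Casson--Walker invariant of $\Sigma_2(K)$.

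The main obstacle is the case $\det(K) = 1$ combined with $\sigma(K) = 0$: there are then no binary-dihedral representations to deform, $Y$ is an integer homology sphere, and the Casson--Lin count vanishes. For such $K$ one must exploit the strong rigidity forced by $K$ being $3$-bridge with determinant $1$---perhaps via direct Khovanov-homology or framed instanton computations, or via an equivariant instanton Floer argument using the hyperelliptic involution on $Y$---to rule out all possible cancellations in the count of irreducible meridional representations.
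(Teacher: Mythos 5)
Your reduction (Birman--Hilden plus the parity count of components, then the translation into finding an irreducible meridian-traceless representation of $\pi_1(S^3\setminus K)$ that is not binary dihedral) is exactly the paper's Proposition~\ref{prop:dcover-map}, and the lifting remark for odd $|H_1|$ is also as in the paper. The gap is in your third step, which is where all the actual work lives, and the tools you list there cannot close it. The signed-count approach fails on arithmetic grounds: Klassen gives exactly $\tfrac{1}{2}(\det(K)-1)$ binary dihedral conjugacy classes, while Lin's invariant equals $\tfrac{1}{2}\sigma(K)$ and one always has $|\sigma(K)|\le\det(K)-1$ in the relevant situations (indeed the paper \emph{derives} this inequality for $\SU(2)$-simple knots in Proposition~\ref{prop:odd-prime-determinant}); so the signed count never exceeds the number of binary dihedral classes and never forces a non-binary-dihedral representative. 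Your proposed ``deformation arguments producing irreducible reps from binary-dihedral ones whenever $\det(K)>1$'' cannot exist as stated, since $T_{2,n}$ is $\SU(2)$-simple with determinant $n>1$; what Heusener--Klassen actually supply, and what the paper uses, is the opposite: a \emph{nondegeneracy} statement ($\dim H^1(S^3\setminus K;\su(2)_\rho)=1$ plus surjectivity onto $H^1(\mu)$) for binary dihedral representations of prime order image.

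You also misidentify where the difficulty sits. The case $\det(K)=1$ is the easy one: there are zero binary dihedral classes, so $\SU(2)$-simplicity would force every meridian-traceless representation to be abelian, contradicting Kronheimer--Mrowka's theorem \cite[Corollary~7.17]{km-excision} that every nontrivial knot admits a non-abelian meridian-traceless representation; no equivariant Floer or Khovanov input is needed. The genuinely hard cases are $\det(K)=3$ and $5$, where the paper's route is: use the Heusener--Klassen nondegeneracy to compute $\dim_\Q\KHI(K)=\det(K)$ via Zentner's formula for $I^\natural$; for determinant $3$ invoke the trefoil detection theorem ($\dim\KHI=3$ implies trefoil) to contradict the $3$-bridge hypothesis; and for determinant $5$ combine Li--Liang's structure theorem with the Farber--Reinoso--Wang cinquefoil detection to conclude $K$ is an instanton L-space knot of genus $\ge 3$, whose branched double cover bounds a simply connected Stein domain with $b_2^+>0$, contradicting a Donaldson-invariant vanishing result (Proposition~\ref{prop:fillings}) for $\SU(2)$-abelian manifolds with cyclically finite fundamental group. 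None of these inputs appears in your outline, so the proposal as written does not constitute a proof for any of the three determinants.
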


In general, we will say that $Y$ is \emph{$\SU(2)$-abelian} if every homomorphism $\pi_1(Y) \to \SU(2)$ has abelian image; then manifolds satisfying the hypotheses of Theorem~\ref{thm:heegaard-genus-2} are \emph{not} $\SU(2)$-abelian.

One could ask the same question about rational homology 3-spheres with other homology groups: for fixed $n \geq 1$, if $Y$ has Heegaard genus 2 and $|H_1(Y)|=n$, must there be a non-abelian representation $\pi_1(Y) \to \SU(2)$?  The answer is negative for many other values of $n$.  For example:
\begin{itemize}
\setlength{\itemsep}{0.5em}
\item If $n \geq 4$ is even, then the connected sum $\mathbb{RP}^3 \# L(n/2,1)$ is $\SU(2)$-abelian.
\item For $n=|24g+11|$, where $g\neq 0,-1$, the toroidal manifolds $Y=Y(T_{2,3},T_{2,2g+1})$ studied in \cite{zentner-simple} (and originally constructed by Motegi \cite[\S3]{motegi}) are $\SU(2)$-abelian.  These have Heegaard genus 2 because they are branched double covers of 3-bridge knots $L(T_{2,3},T_{2,2g+1})$ \cite[Theorem~4.14]{zentner-simple}; see \cite[Figure~1]{sz-menagerie}. For example, when $n=35$ the branch locus is identified in \cite[\S5]{zentner-simple} as the knot $8_{16}$.
\item The smallest known hyperbolic example, pointed out to us by Nathan Dunfield, is $\mathrm{Vol3} = \mathrm{m007}(3,1)$, with $n=18$.  It has Heegaard genus 2 because it is the branched double cover of the $3$-bridge link $L10n46$.
\item The smallest known hyperbolic example with $n$ odd is $\mathrm{m036}(-3,2)$, with $n=45$; it is the branched double cover of the 3-bridge knot $8_{18}$, or equivalently the 4-fold branched cyclic cover of the figure eight knot. (See \cite[\S6]{cornwell}, or \cite[Example~5.6]{sz-menagerie} for more discussion.)  It is also the unique double cover of $\mathrm{Vol3}$.
\end{itemize}

Unfortunately, we do not expect that the strategy we use to prove Theorem~\ref{thm:heegaard-genus-2} will work for larger odd values of $|H_1(Y)|$, because the order-$5$ case already requires several special results that do not apply to higher orders.  One is a theorem of Li and Liang \cite[Theorem~1.4]{li-liang} asserting that if the instanton knot homology $\KHI(K)$ of \cite{km-excision} has rank 1 in Alexander gradings $\pm g$, $\pm(g-1)$, and $0$, and it vanishes everywhere else, then either $K$ or its mirror is an \emph{instanton L-space knot} of genus $g$.  The analogous claim for $\HFK(K)$ is certainly true but does not seem to exist in the Heegaard Floer literature.  The second theorem, by Farber, Reinoso, and Wang \cite{frw-cinquefoil}, says that the $(2,5)$ torus knot is the only instanton L-space knot of genus 2.  We would like to generalize the first to larger values of $\dim \KHI(K)$, and the second to genera $g \geq 3$, but both of these seem entirely out of reach.

Theorem~\ref{thm:heegaard-genus-2} makes use of some facts about $\SU(2)$-simple knots, whose definition we recall now.  Here and in the sequel we view $\SU(2)$ as the unit quaternions; this identifies the trace of a $2\times 2$ matrix with twice the real part of the corresponding quaternion.

\begin{definition}[\cite{zentner-simple}] \label{def:su2-simple}
Given a link $L \subset S^3$, a representation
\[ \rho: \pi_1(S^3 \setminus L) \to \SU(2) \]
is \emph{meridian-traceless} if $\operatorname{Re}(\rho(\mu)) = 0$ for every meridian $\mu$ of $L$.  We then say that $L$ is \emph{$\SU(2)$-simple} if every meridian-traceless representation is conjugate to one with image in the binary dihedral group $\{e^{i\theta}\} \cup \{ e^{i\theta}j \}$.
\end{definition}

What we really show en route to proving Theorem~\ref{thm:heegaard-genus-2} is that the only $\SU(2)$-simple knots of determinant 1 or 3 are the unknot and the trefoils $T_{\pm2,3}$, and that the only $\SU(2)$-simple knots of determinant $5$ and bridge index at most $3$ are the figure eight and the cinquefoils $T_{\pm2,5}$.

Building on this, we can study the representation variety
\[ \sR(K,i) = \left\{ \rho: \pi_1(S^3 \setminus K) \to \SU(2) \mid \rho(\mu) = i \right\}, \]
which consists of representations that send a fixed meridian $\mu$ of $K$ to the quaternion $i$.  Kronheimer and Mrowka \cite[Theorem~7.17]{km-excision} proved that $\sR(K,i) \cong \{\ast\}$ if and only if $K$ is the unknot.  For the trefoils we have
\[ \sR(T_{\pm2,3},i) \cong \{ \ast \} \sqcup S^1, \]
and \cite[Conjecture~1.9]{bs-trefoil} says that this should uniquely characterize the trefoils. The classification of $\SU(2)$-simple knots of determinant $3$ allows us to prove that this is indeed the case.

\begin{theorem} \label{thm:trefoil-recognition-main}
Suppose for a knot $K \subset S^3$ that $\sR(K,i) \cong \{ \ast \} \sqcup S^1$.  Then $K$ is a trefoil.
\end{theorem}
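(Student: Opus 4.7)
The strategy is to show that $K$ is $\SU(2)$-simple with $|\det K|=3$, and then to invoke the paper's classification asserting that the only $\SU(2)$-simple knots of determinant $1$ or $3$ are the unknot and the trefoils; the unknot is excluded at the end since $\sR(\text{unknot},i)\cong\{\ast\}$ by Kronheimer-Mrowka.

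First I would identify the components of $\sR(K,i)$. The isolated point must be the unique abelian representation, which factors through $H_1(S^3\setminus K)=\Z$ and is determined by $\mu\mapsto i$; hence the $S^1$ consists entirely of irreducible representations. The centralizer circle $\{e^{i\theta}\}\subset\SU(2)$ of $i$ acts on $\sR(K,i)$ by conjugation, and on any irreducible representation this action has stabilizer $\{\pm 1\}$. Therefore each $\SU(2)$-conjugacy class of irreducible meridian-traceless representation of $\pi_1(S^3\setminus K)$ contributes exactly one circle orbit in $\sR(K,i)$, and the hypothesis forces there to be precisely one such conjugacy class.

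Next I would deduce $\SU(2)$-simplicity and $|\det K|=3$ simultaneously by counting binary dihedral representations. For a knot with odd determinant $d\geq 3$ there are exactly $(d-1)/2$ conjugacy classes of irreducible binary dihedral meridian-traceless representations, parametrized by non-trivial characters $H_1(\dcover(K))\to S^1$ modulo the covering involution. Each such representation can be conjugated into $\sR(K,i)$, where it produces an irreducible circle orbit. Combining with the previous step, the mere existence of one such representation forces it to be the unique irreducible conjugacy class, yielding both $\SU(2)$-simplicity and $(d-1)/2=1$, so $d=3$. Parallel counts of binary dihedral representations rule out the case $|\det K|=2$ and larger even determinants; the delicate remaining case is $|\det K|=1$, in which no binary dihedral irreducibles exist at all and $K$ would have to be non-trivial with a unique non-binary-dihedral irreducible meridian-traceless conjugacy class.

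The main obstacle is thus ruling out $|\det K|=1$. I would attack this by translating $\sR(K,i)$ into the $\SU(2)$-representation variety of the integer homology sphere $\dcover(K)$: since $K$ is non-trivial, $\dcover(K)$ is a non-trivial integer homology sphere, and results on the richness of $\SU(2)$-representations of such manifolds (for example \cite{bs-splicing} or Kronheimer-Mrowka's surgery framework) should preclude the rigid single-orbit structure $\{\ast\}\sqcup S^1$ required on $\sR(K,i)$. Once this case is dispatched, the earlier classification of $\SU(2)$-simple knots of determinant $1$ or $3$ immediately identifies $K$ as a trefoil.
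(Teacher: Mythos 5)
Your treatment of the case $\det(K)\geq 3$ is correct and is a genuine rearrangement of the paper's counting step: Klassen's theorem gives $\tfrac{1}{2}(\det(K)-1)$ conjugacy classes of irreducible binary dihedral meridian-traceless representations, each contributing its own circle orbit in $\sR(K,i)$, so the hypothesis forces $\det(K)\leq 3$; and when $\det(K)=3$ the unique irreducible orbit is the binary dihedral one, so $K$ is $\SU(2)$-simple and Lemma~\ref{lem:su2-simple-det-3} applies. (The even-determinant cases you worry about are vacuous, since a knot always has odd determinant.) This shortcut lets you conclude $\SU(2)$-simplicity ``for free'' when $\det(K)\geq 3$, whereas the paper proves simplicity first and counts afterwards.

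The genuine gap is the case $\det(K)=1$, where there are no binary dihedral irreducibles and the unique circle would have to consist of non-binary-dihedral representations. Your proposed fix---passing to the nontrivial integer homology sphere $\dcover(K)$ and invoking the abundance of its $\SU(2)$-representations---does not work as stated, because the correspondence only goes one way: a meridian-traceless $\rho$ induces an $\SO(3)$-representation of the cover, but a general representation of $\pi_1(\dcover(K))$ need not be equivariant under the covering involution and so produces no new points of $\sR(K,i)$. Richness of the representation variety of $\dcover(K)$ therefore cannot manufacture a second circle in $\sR(K,i)$. The paper closes this case (and in fact handles all determinants uniformly) with a different idea: the involution $\rho\mapsto\rho'$, $\rho'(\gamma)=j\cdot\chi(\gamma)\rho(\gamma)\cdot j^{-1}$ for the nontrivial central character $\chi$, preserves $\sR(K,i)$, and by Nagasato--Yamaguchi its fixed irreducible characters are exactly the binary dihedral ones. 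Hence if the unique irreducible orbit were not binary dihedral, $\rho$ and $\rho'$ would lie in distinct circle orbits, contradicting $\sR(K,i)\cong\{\ast\}\sqcup S^1$; so $K$ is $\SU(2)$-simple, which combined with Klassen's count rules out $\det(K)=1$ outright. You need this involution, or some substitute for it, to exclude a single non-binary-dihedral orbit in the determinant-one case.
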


Partial results toward Theorem~\ref{thm:trefoil-recognition-main} were known under the assumption that the $S^1$ component consists of \emph{nondegenerate} representations: Kronheimer and Mrowka \cite[Corollary~7.20]{km-excision} first proved that $K$ must be fibered, and then in \cite[Theorem~1.10]{bs-trefoil} we proved that $K$ must be a trefoil.  Theorem~\ref{thm:trefoil-recognition-main} removes the nondegeneracy assumption, establishing the conjecture in full generality.

Along the way to classifying $\SU(2)$-simple knots of small determinant, we deduce the following result which may be independently useful, as part of Theorem~\ref{thm:simple-alexander}.

\begin{theorem} \label{thm:main-simple-alexander}
Let $K\subset S^3$ be an $\SU(2)$-simple knot whose determinant is prime, and write its Alexander polynomial as
\[ \Delta_K(t) = \sum_{i=-g}^g a_i t^i \]
where $g$ is the Seifert genus of $K$.  Then $(-1)^{i+\sigma(K)/2}a_i \geq 0$ for all $i$, where $\sigma(K)$ is the signature of $K$, and $|a_g| \geq 1$ with equality if and only if $K$ is fibered.
\end{theorem}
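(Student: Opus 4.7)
The strategy is to show that the instanton knot Floer homology $\KHI(K)$ is ``thin,'' i.e.\ supported on a single $\Z/2$-graded diagonal, and then to extract the Alexander-polynomial constraints from the graded Euler characteristic $\chigr\KHI(K)=\Delta_K(t)$.

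Write $p=\det(K)$, an odd prime. Since $K$ is $\SU(2)$-simple, every meridian-traceless representation $\pi_1(S^3\setminus K)\to\SU(2)$ is conjugate into the binary dihedral group, and such representations correspond to characters $\pi_1(\Sigma_2(K))\to U(1)$ up to inversion. Because $H_1(\Sigma_2(K);\Z)\cong\Z/p$, this gives one abelian conjugacy class of meridian-traceless representations together with $\tfrac{p-1}{2}$ irreducible ones. I would first convert this count into an upper bound $\dim_\C I^\natural(K)\le p$ via the Morse-theoretic description of singular instanton Floer homology, using a holonomy perturbation that makes all critical points nondegenerate (the irreducibles contributing $2$ each and the abelian representation contributing $1$). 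Combined with the universal lower bound $\dim I^\natural(K)\ge|\chi(I^\natural(K))|=\det(K)$, this yields $\dim I^\natural(K)=p$; the same equality then transfers to $\KHI(K)$ via the standard spectral sequence (or exact triangle) relating singular and sutured instanton invariants, giving $\dim \KHI(K)=p$.

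With $\dim\KHI(K)=p$ in hand, the graded Euler characteristic yields $|a_i|\le\dim\KHI(K,i)$ for each $i$, and hence $\sum_i|a_i|\le p$. On the other hand $|\Delta_K(-1)|=p$ gives $\bigl|\sum_i(-1)^i a_i\bigr|=p$, and the triangle inequality forces $\sum_i|a_i|=p$ and, further, that all nonzero $(-1)^i a_i$ share a common sign. By the standard identification of absolute gradings for thin knot-type Floer theories (mirroring the quasi-alternating case, and detectable directly from $\chi(I^\natural(K))$ and the signature), this common sign is $(-1)^{\sigma(K)/2}$, which is exactly $(-1)^{i+\sigma(K)/2}a_i\ge 0$. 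Finally, $|a_g|=\dim\KHI(K,g)\ge 1$ by the Kronheimer--Mrowka genus-detection theorem for $\KHI$, with equality if and only if $K$ is fibered by the fiberedness-detection theorem.

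The main obstacle will be establishing the upper bound $\dim I^\natural(K)\le p$ from the representation count: this requires a delicate holonomy-perturbation argument to guarantee that no extra critical points arise, especially near the reducible stratum. Transferring the rank equality from $I^\natural(K)$ to $\KHI(K)$, and pinning down the absolute sign as $(-1)^{\sigma(K)/2}$, are secondary but still nontrivial steps that require careful bookkeeping of gradings.
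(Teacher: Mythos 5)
Your overall architecture matches the paper's: count the binary dihedral conjugacy classes (there are $\tfrac{1}{2}(\det(K)-1)$ irreducible ones, by Klassen), conclude $\dim_\Q \KHI(K) = \det(K)$, run the triangle inequality on $\det(K) = |\Delta_K(-1)| = |\sum_i (-1)^i \chi(\KHI(K,i))| \leq \sum_i \dim_\Q \KHI(K,i)$ to force each Alexander summand into a single $\Z/2$ grading with a common sign, and finish with genus and fiberedness detection for $\KHI$. The second half of your argument is essentially the paper's Theorem 3.3, and is fine; the only soft spot there is the sign determination, where you appeal to a ``standard identification of absolute gradings for thin knot-type Floer theories.'' The paper does this elementarily: the common sign of $(-1)^i a_i$ is the sign of $\Delta_K(-1)$, which is $(-1)^{\sigma(K)/2}$ because $\Delta_K(-1) \equiv \Delta_K(1) = 1 \pmod 4$ while $\det(K) \equiv (-1)^{\sigma(K)/2} \pmod 4$ by Murasugi. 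You should replace your vague appeal with this.

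The genuine gap is the step you yourself flag as ``the main obstacle,'' namely the upper bound $\dim I^\natural(K) \leq p$. A holonomy perturbation making all critical points nondegenerate does not, by itself, bound the number of resulting generators by the number of unperturbed conjugacy classes: a degenerate critical orbit can split into several nondegenerate ones under perturbation, so the count is only controlled if you first prove the unperturbed representations are already nondegenerate in the appropriate sense. This is exactly what the paper does: it invokes Zentner's criterion, which yields $\dim_\Q I^\natural(K) = 2n+1$ provided each irreducible binary dihedral $\rho$ satisfies $\dim_\R H^1(S^3\setminus K;\su(2)_\rho) = 1$ and the restriction $H^1(S^3\setminus K;\su(2)_\rho) \to H^1(\mu;\su(2)_\rho)$ is onto, and then verifies both conditions via an explicit Wirtinger-presentation computation following Heusener--Klassen. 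Crucially, this is where the primality of $\det(K)$ enters: it forces each irreducible binary dihedral representation to have image of order exactly $4p$, which is the hypothesis of the Heusener--Klassen computation. Your sketch uses primality nowhere beyond the representation count (which needs only that $\det(K)$ is odd), which is a symptom that the essential input is missing; without it the argument would ``prove'' the same statement for composite odd determinants, which the paper explicitly does not claim.
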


In fact, in Theorem~\ref{thm:simple-alexander} we show that $(-1)^{i+\sigma(K)/2}a_i$ is nonnegative because it is equal to the dimension of the summand $\KHI(K,i)$ of the instanton knot homology of $K$.

The reason we insist that $\det(K)$ be prime in Theorem~\ref{thm:main-simple-alexander} is that it allows us to verify a nondegeneracy condition for binary dihedral representations of $\pi_1(S^3\setminus K)$, following work of Heusener and Klassen \cite{heusener-klassen}; we do not know whether the same conclusions should hold when $K$ is $\SU(2)$-simple but $\det(K)$ is composite.  These nondegeneracy conditions allow us to show that $\dim \KHI(K) = \det(K)$, and then in the determinant-3 case, we know from \cite{bs-trefoil} that the only such knots are the trefoils.  When $\det(K)=5$ this is not quite enough, and we also need to understand Stein fillings of $\SU(2)$-abelian manifolds; see Proposition~\ref{prop:fillings} for details.

Finally, the reader might wonder what happens when $|H_1(Y)|$ is even.  In this case we have to consider $\SU(2)$-simplicity for links rather than knots, and less is known in this setting, but recent work of Xie and Zhang \cite{xie-zhang-traceless} provides a starting point.  Even so, it is hard to guarantee the existence of $\SU(2)$ representations, because our methods might lead to $\SO(3)$ representations that do not lift to $\SU(2)$.  (When $|H_1(Y)|$ is odd, there is no obstruction to lifting.)   But we can often at least find non-abelian $\SO(3)$ representations, and in particular we prove the following.

\begin{theorem} \label{thm:order-2}
If $Y$ has Heegaard genus $2$ and satisfies $|H_1(Y;\Z)| = 2$, then there is a representation
\[ \pi_1(Y) \to \SO(3) \]
with non-abelian image.
\end{theorem}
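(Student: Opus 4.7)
The plan is to adapt the branched double cover strategy of Theorem~\ref{thm:heegaard-genus-2} to the setting of even first homology, tracking carefully the potential obstruction to lifting $\SO(3)$ representations to $\SU(2)$. By the theorem of Birman and Hilden \cite{birman-hilden-branched}, any closed orientable $3$-manifold of Heegaard genus at most two can be written as the double branched cover $\Sigma_2(L)$ of some 3-bridge link $L \subset S^3$. Since $|\det(L)| = |H_1(Y;\Z)| = 2$ and the determinant of any knot is odd, such an $L$ must have at least two components.

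Next I would set up the correspondence that turns a traceless link group representation into an $\SO(3)$ representation of $\pi_1(Y)$. Given a meridian-traceless representation $\rho \colon \pi_1(S^3 \setminus L) \to \SU(2)$, its composite with the projection $\SU(2) \to \SO(3)$ sends each meridian to an order-two rotation, so it factors through the orbifold fundamental group $\pi_1^{\mathrm{orb}}(S^3,L)$ in which every meridian has order two.  The parity-of-meridian homomorphism $\pi_1^{\mathrm{orb}}(S^3, L) \to \Z/2$ has kernel $\pi_1(\Sigma_2(L)) = \pi_1(Y)$, so restricting the $\SO(3)$ representation to this index-two subgroup produces
\[ \rho^\ast \colon \pi_1(Y) \to \SO(3). \]
Running through the closed subgroups of $\SU(2)$, one checks that $\rho^\ast$ has non-abelian image as soon as the image of $\rho$ is not contained in a binary dihedral subgroup: the other possibilities are binary tetrahedral, binary octahedral, binary icosahedral, or $\SU(2)$ itself, and in each case the composite $\SO(3)$-group $A_4$, $S_4$, $A_5$, or $\SO(3)$ has no index-two abelian subgroup, so the restriction to $\pi_1(Y)$ remains non-abelian. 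By contrast, a binary dihedral image would push every element of $\pi_1(Y)$ into the abelian ``rotation'' part $S^1$, yielding only an abelian $\rho^\ast$.

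The main obstacle is therefore to produce a meridian-traceless representation of $\pi_1(S^3 \setminus L)$ whose image is not contained in any binary dihedral subgroup, i.e., to show that $L$ is not $\SU(2)$-simple in the link analogue of Definition~\ref{def:su2-simple}. For this I would invoke the recent work of Xie and Zhang \cite{xie-zhang-traceless} on the traceless $\SU(2)$-character variety of a link with nonzero determinant, combined with the constraints that $L$ be a 3-bridge link of determinant $2$ with at least two components. The delicate case to rule out is when every meridian-traceless representation is binary dihedral: treating this would likely require extending the structural analysis behind Theorem~\ref{thm:main-simple-alexander} to the link setting and combining it with the instanton-theoretic input used for the odd-determinant cases earlier in the paper, so as either to construct a non-binary-dihedral representation by hand or to derive a contradiction from the list of possible 3-bridge diagrams for $L$.
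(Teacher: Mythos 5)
Your reduction is the same as the paper's: Birman--Hilden writes $Y\cong\dcover(L)$ for a 3-bridge link $L$ of determinant $2$ (hence two components), the map $\rho\mapsto\tilde\rho$ on the index-two subgroup is exactly the map $B$ of \eqref{eq:mt-bdc-so3}, and the observation that $\tilde\rho$ is non-abelian unless $\Img(\rho)$ is binary dihedral is Lemma~\ref{lem:rho-vs-B} (taken from Zentner; note the image of $\rho$ need not be closed, so one should argue via its closure or via finiteness of $\Img(B(\rho))$, which is where the hypothesis $\det(L)\neq 0$ enters). But the heart of the theorem --- showing that a 3-bridge link of determinant $2$ other than the Hopf link is not $\SU(2)$-simple, i.e.\ Proposition~\ref{prop:su2-simple-det-2} --- is exactly the step you leave open. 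You correctly invoke Xie--Zhang to produce an irreducible meridian-traceless representation $\rho$ (a two-component link of determinant $2$ that is not the Hopf link is also not a connected sum of Hopf links, since such a sum with two components is a single Hopf link), but you do not rule out the possibility that this $\rho$, and every other irreducible traceless representation, is binary dihedral. That is the entire content of the remaining argument, and your proposed route --- extending the instanton/Alexander analysis of Theorem~\ref{thm:main-simple-alexander} to links, or enumerating 3-bridge diagrams --- is speculative and not what is needed.

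The paper closes this gap by elementary means, with no further gauge theory. First (Lemma~\ref{lem:det-2-bd-image}): since $\det(L)\equiv 2\pmod 4$ forces $\lk(L_1,L_2)$ to be odd, a parity argument on the Wirtinger presentation shows that a non-abelian binary dihedral traceless representation would have to send \emph{every} meridian into the coset $\{e^{i\theta}j\}$ --- the alternative, one component mapping into $\{e^{i\theta}\}$, forces the signs $\pm i$ along that component to flip at each undercrossing with the other component, which is inconsistent when the linking number is odd. Second (Lemma~\ref{lem:dihedral-image-link}): for a representation sending all meridians to $\{e^{i\theta}j\}$, Klassen's linear-algebra count over the Wirtinger relations shows the image lies in a finite binary dihedral group of order $2\det(L)=4$, which is abelian --- a contradiction. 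Without these two lemmas (or a substitute for them), your argument establishes only the reduction, not the theorem.
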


As in the case of determinants $1$ and $3$, what we really prove is Proposition~\ref{prop:su2-simple-det-2}, asserting that the Hopf link is the only $\SU(2)$-simple link of determinant $2$, and then Theorem~\ref{thm:order-2} follows from the special case of 3-bridge links.

\subsection*{Organization}

In Section~\ref{sec:zhs3} we prove Theorem~\ref{thm:heegaard-genus-2} for integral homology spheres.  In Section~\ref{sec:khi-simple} we study the instanton knot homology of $\SU(2)$-simple knots, and in Section~\ref{sec:fillings} we prove Proposition~\ref{prop:fillings}, asserting that simply connected Stein fillings of many $\SU(2)$-abelian manifolds must have negative definite intersection form.  In Section~\ref{sec:35} we apply these to prove Theorem~\ref{thm:heegaard-genus-2} in the cases where $H_1(Y)$ has order 3 or 5.  Section~\ref{sec:trefoil} uses this work to prove Theorem~\ref{thm:trefoil-recognition-main}, that the variety $\sR(K,i)$ detects the trefoils.  Finally, Section~\ref{sec:order-2} is devoted to the proof of Theorem~\ref{thm:order-2}.

\subsection*{Acknowledgments}

We are grateful to Tye Lidman, Mike Miller Eismeier, Danny Ruberman, and Raphael Zentner for interesting conversations regarding this work.  We also thank Nathan Dunfield for telling us about $\mathrm{Vol3}$ and its character variety, and the anonymous referee for their feedback.  JAB was supported by NSF FRG Grant DMS-1952707.

\section{From $\SU(2)$-simple knots to $\SU(2)$-abelian 3-manifolds} \label{sec:zhs3}

Given a knot $K \subset S^3$, we will let $\dcover(K)$ denote the branched double cover of $K$.  The proof of Theorem~\ref{thm:heegaard-genus-2} then relies on the following proposition.  

\begin{proposition} \label{prop:dcover-map}
For each odd integer $d\geq 1$, the map $K \mapsto \dcover(K)$ gives a well-defined surjection
\begin{equation} \label{eq:main-surjection}
\left\{ \begin{array}{c} \SU(2)\text{-simple 3-bridge knots} \\ K\subset S^3\text{ of determinant }d \end{array} \right\}
\to
\left\{ \begin{array}{c} \SU(2)\text{-abelian} \text{ 3-manifolds $Y$ with} \\ \text{Heegaard genus 2 and }|H_1(Y)| = d \end{array} \right\}.
\end{equation}
\end{proposition}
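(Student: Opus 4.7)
The plan splits into well-definedness and surjectivity, both hinging on a correspondence between meridian-traceless $\SU(2)$ representations of $\pi_1(S^3 \setminus K)$ and $\SU(2)$ representations of $\pi_1(\dcover(K))$ when $\det(K)$ is odd.

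For well-definedness, let $K$ be an $\SU(2)$-simple 3-bridge knot of determinant $d$. Then $|H_1(\dcover(K);\Z)| = |\det(K)| = d$ is standard, and a 3-bridge decomposition of $(S^3,K)$ branched-double-covers to a genus-2 Heegaard splitting of $\dcover(K)$, since the branched double cover of a 3-ball over three unknotted arcs is a genus-2 handlebody. The content is that $\dcover(K)$ is $\SU(2)$-abelian. I would argue by contrapositive: given a non-abelian $\SU(2)$ representation $\sigma$ of $\pi_1(\dcover(K))$, use the covering involution $\tau$ on $\dcover(K)$ to build a meridian-traceless $\SU(2)$ representation $\rho$ of $\pi_1(S^3 \setminus K)$ extending (the $\SO(3)$ reduction of) $\sigma$. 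When $\sigma \cong \sigma \circ \tau_*$, choose $g \in \SU(2)$ with $g\sigma g^{-1} = \sigma \circ \tau_*$ whose image in $\SO(3)$ is an involution, and extend by sending a meridian to $g$; when $\sigma$ is not $\tau$-equivariant, a separate argument is needed. The odd-determinant hypothesis kills the $\Z/2$ lifting obstruction, yielding the desired $\rho$. Since $\sigma$ is non-abelian, the resulting $\rho$ is not binary dihedral, contradicting $\SU(2)$-simplicity.

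For surjectivity, let $Y$ have Heegaard genus 2, $|H_1(Y)| = d$ odd, and be $\SU(2)$-abelian. By Birman--Hilden \cite{birman-hilden-branched}, $Y = \dcover(L)$ for some 3-bridge link $L$. A multi-component link would force $\Z/2$ summands in the first homology of its branched double cover, contradicting $|H_1(Y)|$ odd; so $L$ is a knot $K$ with $\det(K) = d$. For $\SU(2)$-simplicity of $K$, take any meridian-traceless $\rho: \pi_1(S^3 \setminus K) \to \SU(2)$; since $\rho(\mu^2) = -1$ is central, $\rho$ descends through $\SO(3)$ to a representation of $\pi_1(Y)$, which lifts back to $\SU(2)$ via vanishing of the $\Z/2$ lifting obstruction ($H^2(\pi_1(Y);\Z/2) = 0$ for odd $d$). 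This lift is abelian by hypothesis, so $\rho|_{\pi_1(\widetilde{S^3 \setminus K})}$ has image in a maximal torus $T \subset \SU(2)$. Since any meridian $\mu$ normalizes $\pi_1(\widetilde{S^3 \setminus K})$, the element $\rho(\mu)$ normalizes $T$, so $\rho(\mu) \in T \cup jT$, the binary dihedral group; hence $\rho$ itself is binary dihedral.

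The main obstacle is the well-definedness direction: extending a non-abelian $\SU(2)$ representation of $\pi_1(\dcover(K))$ to a meridian-traceless $\SU(2)$ representation of $\pi_1(S^3 \setminus K)$ requires careful analysis of how the covering involution acts on $\sigma$, and of a $\Z/2$-valued cohomological obstruction---both ultimately controlled by the odd-determinant hypothesis.
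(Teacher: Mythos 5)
Your surjectivity argument is essentially the paper's: it reconstructs the content of Zentner's Proposition~3.1 and Lemma~3.2 (descend $\ad\rho$ to $\pi_1(Y)$, lift back to $\SU(2)$ using oddness of $d$, and conclude binary dihedrality from the normalizer of a maximal torus), modulo some glossed-over case analysis of abelian subgroups of $\SO(3)$ whose preimages in $\SU(2)$ need not be abelian (e.g.\ the quaternion group $Q_8$). That half is fine in outline.

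The well-definedness half has a genuine gap, and it is the hard part of the proposition. You propose to show that $\dcover(K)$ is $\SU(2)$-abelian by a contrapositive covering-space argument: take a non-abelian $\sigma: \pi_1(\dcover(K))\to\SU(2)$ and extend it to a meridian-traceless representation of $\pi_1(S^3\setminus K)$. But such an extension only exists when $\sigma$ is conjugate to $\sigma\circ\tau_*$; for non-equivariant $\sigma$ there is no extension within $\SU(2)$ at all, and the ``separate argument'' you defer to does not exist in this elementary form. Indeed, if your strategy worked it would prove that \emph{every} $\SU(2)$-simple knot has $\SU(2)$-abelian branched double cover, with no use of the 3-bridge hypothesis --- and the paper explicitly notes (Remark~\ref{rem:abelian-implies-simple}) that this is false for knots of bridge index greater than $3$. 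The implication you need, that an $\SU(2)$-simple \emph{3-bridge} knot has $\SU(2)$-abelian branched double cover, is \cite[Theorem~1.6]{sz-menagerie}; it is a substantive theorem whose proof relies on the 3-bridge tangle decomposition and holonomy-perturbation techniques, not on descent/lifting of representations. A second, smaller omission: you only establish that the Heegaard genus of $\dcover(K)$ is \emph{at most} $2$; to land in the stated codomain you must rule out genus $0$ or $1$, which the paper does via Hodgson--Rubinstein (a knot with branched double cover $S^3$ or a lens space has bridge index at most $2$).
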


\begin{proof}
We first check that this map is well-defined.  If $K$ is an $\SU(2)$-simple 3-bridge knot, then its branched double cover is $\SU(2)$-abelian by \cite[Theorem~1.6]{sz-menagerie}.  Moreover, $\dcover(K)$ certainly has Heegaard genus at most 2, since we can split $K$ in half using a bridge sphere, and the branched double cover of the 3-ball over either half will be a genus-2 handlebody.  In fact, the Heegaard genus of $\dcover(K)$ must be exactly 2, since otherwise $\dcover(K)$ would be $S^3$ or a lens space, and then Hodgson and Rubinstein \cite{hodgson-rubinstein} proved that $K$ would have bridge index at most 2.  So this proves that $\dcover(K)$ belongs to the codomain, as claimed.

We now wish to show that the map is surjective, so let $Y$ be an element of the codomain.  Since $Y$ has Heegaard genus 2, Birman and Hilden \cite[Theorem~1]{birman-hilden-branched} proved that it is the branched double cover of a 3-bridge link $L \subset S^3$.  Writing $\F = \Z/2\Z$, we have a short exact sequence of chain complexes
\[ 0 \to C_*(S^3,L;\F) \to C_*(\dcover(L);\F) \to C_*(S^3;\F) \to 0, \]
where the first map is a transfer map and the second is induced by projection, and this leads to an isomorphism
\begin{equation} \label{eq:homology-bdc}
H_1(\dcover(L);\F) \cong H_1(S^3,L;\F) \cong \tilde{H}_0(L;\F)
\end{equation}
using the long exact sequence of the pair $(S^3,L)$.   Thus if $L$ has $\ell$ components then
\[ 0 = \dim_\F H_1(Y;\F) = \dim_\F H_1(\dcover(L); \F) = \ell - 1, \]
so $\ell=1$ and we can write $Y = \dcover(K)$ where $K=L$ is a 3-bridge knot.

We know from \cite[Lemma~3.2]{zentner-simple} and the fact that $H_1(\dcover(K);\F)=0$ that $\dcover(K)$ is $\SU(2)$-abelian if and only if it is $\SO(3)$-abelian, and the latter implies that $K$ is $\SU(2)$-simple by \cite[Proposition~3.1]{zentner-simple}, so if $Y$ is $\SU(2)$-abelian then $K$ must be $\SU(2)$-simple.  Thus $K$ belongs to the preimage of $Y$, proving the desired surjectivity.
\end{proof}

\begin{remark} \label{rem:abelian-implies-simple}
We note from the proof of Proposition~\ref{prop:dcover-map} (and in particular from \cite[\S3]{zentner-simple}) the fact that if a knot $K\subset S^3$ has an $\SU(2)$-abelian branched double cover, then $K$ must be $\SU(2)$-simple.  This holds regardless of the bridge index of $K$ or the Heegaard genus of $\dcover(K)$.  By contrast, an $\SU(2)$-simple knot of bridge index greater than 3 need not have an $\SU(2)$-abelian branched double cover.
\end{remark}

The following fact constrains the knots considered in Proposition~\ref{prop:dcover-map}, though we will not use it in this paper.

\begin{proposition} \label{prop:hyperbolic}
All $\SU(2)$-simple 3-bridge knots are hyperbolic.
\end{proposition}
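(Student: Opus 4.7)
The strategy is to argue by contradiction, combining Thurston's geometrization of knot complements with an explicit representation-theoretic construction. Since every non-hyperbolic knot in $S^3$ is either a torus knot or a satellite knot, under the hypotheses that $K$ is $3$-bridge and $\SU(2)$-simple I would aim to exhibit in each case a meridian-traceless representation $\rho\colon \pi_1(S^3\setminus K) \to \SU(2)$ whose image is not contained in any conjugate of the standard binary dihedral subgroup $\{e^{i\theta}\}\cup\{e^{i\theta}j\}$.

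In the torus-knot case, bridge index forces $K = T_{3,q}$ for some $q \geq 4$ with $\gcd(3,q) = 1$. Using the presentation $\pi_1(S^3\setminus K) = \langle x,y \mid x^3 = y^q\rangle$ with meridian $\mu = x^a y^{-b}$ (where $3a - bq = 1$), I would define
\[ \rho(x) = \cos(\pi/3) + \sin(\pi/3)\,u_x, \qquad \rho(y) = \cos(\ell\pi/q) + \sin(\ell\pi/q)\,u_y \]
for unit imaginary quaternions $u_x, u_y$ and an odd integer $\ell$, which forces $\rho(x)^3 = \rho(y)^q = -1$. The meridian-traceless condition reduces to a single scalar equation $\langle u_x, u_y\rangle = -\cot(a\pi/3)\cot(b\ell\pi/q)$, and $\ell$ can be chosen so that the right-hand side lies in $[-1,1]$. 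For any solution with $u_x \neq \pm u_y$, the image of $\rho$ contains $\rho(x)$ of order $6$ and $\rho(y)$ of order at least $8$, both of order different from $4$. Since flip elements of a binary dihedral subgroup of $\SU(2)$ all have order $4$, any non-flip element lies in the maximal cyclic subgroup, which in turn sits inside a single maximal torus; so if the image of $\rho$ lay inside some binary dihedral subgroup, then $\rho(x)$ and $\rho(y)$ would share a maximal torus and hence $u_x = \pm u_y$, contradicting our choice.

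In the satellite-knot case, Schubert's bridge-index theorem combined with the additivity of bridge index under connect sum forces $K = K_1 \# K_2$ with both $K_i$ nontrivial $2$-bridge knots. Each $K_i$ is itself $\SU(2)$-simple because $\dcover(K_i)$ is a lens space and therefore $\SU(2)$-abelian, and since $\det(K_i) \geq 3$ each admits a nontrivial irreducible meridian-traceless representation $\rho_i$ whose image is a nonabelian binary dihedral subgroup of $\SU(2)$ of order at least $12$. After normalizing $\rho_i(\mu) = i$, the axis of each such image lies in the plane perpendicular to $i$ and can be freely rotated there by conjugating with elements of the centralizer $\{e^{i\theta}\}$ of $i$. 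I would choose $\rho_1$ and $\rho_2$ with distinct axes and glue them via the amalgamated-product structure $\pi_1(S^3\setminus K) = \pi_1(S^3\setminus K_1) *_{\langle\mu\rangle} \pi_1(S^3\setminus K_2)$; the resulting representation has image containing elements of order at least $6$ in two distinct maximal tori of $\SU(2)$, and the previous paragraph's argument shows this image cannot be contained in any binary dihedral subgroup.

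The principal obstacle is the case-by-case nature of the torus-knot step: for each $q \geq 4$ coprime to $3$, one must produce an odd $\ell$ for which the traceless equation is solvable in $[-1,1]$. The naive choice $\ell = 1$ fails as soon as $q$ is large enough that $\cot(b\pi/q) > \sqrt{3}$, so $\ell$ has to be chosen in a $q$-dependent manner, which calls for a uniform cotangent bookkeeping argument. A secondary concern in the satellite case is that each $\rho_i$'s binary dihedral image has a canonically defined axis; this follows from $\det(K_i) \geq 3$, which forces the image to have order at least $12$ and hence a unique maximal cyclic subgroup of order greater than $4$.
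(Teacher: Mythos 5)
Your top-level strategy matches the paper's: invoke Thurston to reduce to torus knots and satellites, and in the satellite case use Schubert's bridge-index results to force $K = K_1 \# K_2$ with both summands $2$-bridge. Your satellite argument, however, is genuinely different from the paper's and is essentially correct: the paper passes to the branched double cover, which is a connected sum of odd lens spaces, and writes down an irreducible representation of $(\Z/p\Z)\ast(\Z/q\Z)$ to contradict $\SU(2)$-abelianness of $\dcover(K)$; you instead stay upstairs and glue two binary dihedral meridian-traceless representations with skewed axes along the amalgamating meridian, producing a traceless representation of $\pi_1(S^3\setminus K)$ whose image contains elements of order greater than $4$ in two distinct maximal tori and hence is not binary dihedral. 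Both routes work, and yours has the small advantage of not needing the implication ``$3$-bridge and $\SU(2)$-simple $\Rightarrow$ $\SU(2)$-abelian branched double cover.''

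The torus knot case is where you have a genuine gap, and you have flagged it yourself. Your plan requires, for every $q$ with $|q|\geq 4$ and $\gcd(3,q)=1$, an odd $\ell$ for which $\left|\cot(a\pi/3)\cot(b\ell\pi/q)\right| < 1$ (you need strict inequality to get $u_x \neq \pm u_y$), together with side conditions you have not addressed: $\sin(b\ell\pi/q)\neq 0$, and $\rho(y)$ not accidentally having order $4$ (which can happen when $\gcd(\ell,q) = q/2$), since an order-$4$ element can sit in a binary dihedral group as a ``flip.'' None of this is carried out, and it is exactly the content of the claim in this case. The paper avoids the entire computation: since $K = T(3,q)$ has determinant $1$ or $3$, and $K$ is $3$-bridge and $\SU(2)$-simple, its branched double cover is an $\SU(2)$-abelian Seifert fibered rational homology sphere with $|H_1|\leq 3$; by the classification of $\SU(2)$-abelian Seifert fibered spaces this forces $\dcover(K)$ to be $S^3$ or a lens space, which contradicts the fact (via Hodgson--Rubinstein) that the branched double cover of a $3$-bridge knot has Heegaard genus exactly $2$. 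If you want to keep your direct representation-theoretic route, you must actually produce the admissible $\ell$ uniformly in $q$; alternatively, you could note that $\det(T(3,q))\in\{1,3\}$ gives at most one binary dihedral traceless class by Klassen, while $|\sigma(T(3,q))|/2 \geq 3$ forces more irreducible traceless classes than that -- but either way the case as written is not complete.
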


\begin{proof}
Suppose that $K$ is an $\SU(2)$-simple 3-bridge knot, but that it is not hyperbolic.  Then $K$ is either a torus knot or a satellite \cite[Corollary~2.5]{thurston-kleinian}, so we address these cases separately below.

Suppose first that $K$ is a torus knot.  Schubert \cite[Satz~10]{schubert} showed that the only 3-bridge torus knots are $T(3,q)$ where $|q| > 3$, so $K = T(3,q)$ for some such $q$.  Then $\det(K)$ is either $1$ or $3$ depending on whether $q$ is odd or even, so $\dcover(K)$ is an $\SU(2)$-abelian Seifert fibered space, with first homology of order 1 or 3.  The only such manifolds are $S^3$ and lens spaces \cite[Theorem~1.2]{sz-menagerie}, contradicting the fact that $\dcover(K)$ has Heegaard genus 2.

Now suppose that $K$ is a satellite knot, say with pattern $P \subset S^1\times D^2$ and companion $C$.  Let $\alpha > 0$ be the wrapping number of $P$, meaning the minimal number of times that some meridional disk of $S^1\times D^2$ intersects $P$.  Then Schubert \cite[Satz~3]{schubert} proved that the bridge index satisfies
\[ b(K) \geq \alpha \cdot b(C). \]
Since $b(K)=3$ and the knottedness of $C$ implies that $b(C) \geq 2$, we must have $\alpha=1$, hence $K$ is a connected sum $C \# C'$.  But then $b(C\#C')=b(C) + b(C') - 1$ by \cite[Satz~7]{schubert}, so $C$ and $C'$ are both 2-bridge knots, and thus the $\SU(2)$-abelian manifold
\[ \dcover(K) \cong \dcover(C) \# \dcover(C') \]
is a connected sum of two lens spaces of odd order.  Now $\pi_1(\dcover(K)) \cong (\Z/p\Z) \ast (\Z/q\Z)$ for some odd integers $p,q \geq 3$, and there is an irreducible representation $\pi_1(\dcover(K)) \to \SU(2)$ defined by sending the generators of the $\Z/p\Z$ and $\Z/q\Z$ factors to $e^{2\pi i/p}$ and $e^{2\pi j/q}$ respectively, so $\dcover(K)$ is not $\SU(2)$-abelian after all and we have a contradiction.
\end{proof}

Proposition~\ref{prop:dcover-map}, together with some gauge-theoretic input from Kronheimer and Mrowka \cite{km-excision}, suffices to establish the homology sphere case of Theorem~\ref{thm:heegaard-genus-2}.

\begin{proof}[Proof of Theorem~\ref{thm:heegaard-genus-2} in the case $H_1(Y;\Z)=0$]
We wish to show that there are no $\SU(2)$-abelian integer homology 3-spheres of Heegaard genus 2.  By Proposition~\ref{prop:dcover-map}, it suffices to show that there are no nontrivial $\SU(2)$-simple knots of determinant 1, so let us suppose that such a knot $K$ exists.  Then Klassen \cite[Theorem~10]{klassen} proved that there are exactly $\frac{1}{2}(\det(K)-1) = 0$ conjugacy classes of non-abelian representations $\pi_1(S^3\setminus K) \to \SU(2)$ with binary dihedral image.  By assumption, every representation
\[ \rho: \pi_1(S^3\setminus K) \to \SU(2) \]
such that $\operatorname{Re}\rho(\mu)=0$ is binary dihedral, so now every such $\rho$ has abelian image.  But Kronheimer and Mrowka \cite[Corollary~7.17]{km-excision} proved that since $K$ is a nontrivial knot, there is at least one non-abelian $\rho$ with $\operatorname{Re}\rho(\mu)=0$, a contradiction.
\end{proof}

\begin{remark}
Zentner \cite[Proposition~9.1]{zentner-simple} gave a slightly different proof of the fact that a homology sphere which is the branched double cover of a knot cannot be $\SU(2)$-abelian.  We argue differently here because the idea of counting binary dihedral representations will turn out to be useful for manifolds with nontrivial first homology.
\end{remark}

\section{Instanton knot homology and $\SU(2)$-simple knots} \label{sec:khi-simple}

When $Y$ is not an integer homology sphere, we can still occasionally say something about representations $\pi_1(Y) \to \SU(2)$, though it will require substantially more machinery.  In preparation, we study the instanton knot homology \cite{km-excision} of $\SU(2)$-simple knots with prime determinant.

\begin{proposition} \label{prop:odd-prime-determinant}
Let $K\subset S^3$ be an $\SU(2)$-simple knot, with determinant an odd prime $p$.  Then its signature satisfies $|\sigma(K)| \leq p-1$ and $\sigma(K) \equiv p-1 \pmod{4}$, and its instanton knot homology satisfies $\dim_\Q \KHI(K) = p$.
\end{proposition}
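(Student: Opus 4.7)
The plan is to leverage the $\SU(2)$-simple hypothesis to reduce all three claims to an enumeration of the non-abelian binary dihedral representations of $\pi_1(S^3 \ssm K)$, combined with Klassen's count, the Heusener--Klassen deformation theorem (which exploits primality of $\det K$), and the relation between these representations, the Tristram--Levine signature, and $\KHI(K)$.

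First I would apply \cite{klassen}: when $\det K = p$ there are exactly $(p-1)/2$ conjugacy classes of non-abelian binary dihedral $\SU(2)$-representations of $\pi_1(S^3 \ssm K)$, each naturally labeled by an unordered pair $\{\chi,\chi^{-1}\}$ of nontrivial characters of $H_1(\dcover(K);\Z) \cong \Z/p\Z$.  By $\SU(2)$-simplicity these are \emph{all} the non-abelian meridian-traceless representations.  The Heusener--Klassen deformation theorem \cite{heusener-klassen}, for which the primality of $p$ is essential, then certifies that each such representation is a smooth point of the $\SU(2)$-character variety --- so in particular isolated and nondegenerate --- and that it corresponds to a simple root of $\Delta_K$.

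For the signature claims, each non-abelian binary dihedral $\rho_k$ corresponds to a simple root $\omega_k$ of $\Delta_K$ at a $2p$-th root of unity, and the Tristram--Levine signature $\sigma_\omega(K)$ jumps by exactly $\pm 2$ as $\omega$ crosses $\omega_k$.  Since $\sigma_1(K) = 0$ and $\sigma_{-1}(K) = \sigma(K)$, summing the jumps as $\omega$ sweeps the upper unit semicircle gives $\sigma(K) = 2\sum_{k=1}^{(p-1)/2}\epsilon_k$ with each $\epsilon_k \in \{\pm 1\}$, and this immediately yields both $|\sigma(K)| \leq p-1$ and $\sigma(K) \equiv 2\cdot(p-1)/2 \equiv p-1 \pmod{4}$.

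For $\dim_\Q \KHI(K) = p$, the lower bound comes from the Euler-characteristic identity
\[
\dim_\Q \KHI(K) \;\geq\; |\chi(\KHI(K))| \;=\; |\Delta_K(-1)| \;=\; p.
\]
For the upper bound, identify the generators of the $\KHI$ chain complex with critical points of the appropriate perturbed Chern--Simons functional on the sutured knot complement: by $\SU(2)$-simplicity these come from the unique abelian traceless representation (contributing one generator) together with the $(p-1)/2$ non-abelian binary dihedrals (each contributing two generators via the Morse--Bott structure of the critical set in the Kronheimer--Mrowka framework).  The total is $1 + 2\cdot(p-1)/2 = p$.

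The main obstacle will be the upper-bound step: translating the algebro-geometric smoothness from Heusener--Klassen into the precise Floer-theoretic nondegeneracy needed to identify the $\KHI$ chain generators with representations, and accounting cleanly for the Morse--Bott multiplicity of $2$ per non-abelian binary dihedral.  Equality of the two bounds then forces $\KHI(K)$ to be thin and supported in matching parities, which will be the key input to Theorem~\ref{thm:main-simple-alexander}.
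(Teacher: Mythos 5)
Your plan for $\dim_\Q \KHI(K) = p$ is essentially the paper's: Klassen's count of $(p-1)/2$ binary dihedral classes, $\SU(2)$-simplicity to conclude these exhaust the irreducible meridian-traceless representations, and Heusener--Klassen (where primality enters, by forcing the image to have order exactly $4p$) to certify nondegeneracy. The step you flag as ``the main obstacle'' is handled in the paper by quoting Zentner's Proposition~7.3, which gives $\dim_\Q I^\natural(K) = 2n+1$ exactly when each representation satisfies $\dim_\R H^1(S^3\setminus K;\su(2)_\rho)=1$ and surjectivity of restriction to $H^1(\mu;\su(2)_\rho)$ (these are extracted from Heusener--Klassen in the paper's Proposition~\ref{prop:binary-dihedral-nondegenerate}), followed by the isomorphism $I^\natural(K;\Q)\cong\KHI(K;\Q)$. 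So that half of your outline is sound and fillable, and there is no need for a separate lower bound via $|\Delta_K(-1)|$.

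The genuine gap is in the signature argument. You assert that each non-abelian binary dihedral representation corresponds to a simple root of $\Delta_K$ at a $2p$-th root of unity and then sum Tristram--Levine jumps across these roots. That correspondence is false: the binary dihedral classes are indexed by pairs $\{\chi,\chi^{-1}\}$ of nontrivial characters of $H_1(\dcover(K);\Z)\cong\Z/p\Z$, which has nothing to do with unit-circle roots of $\Delta_K$. The figure-eight knot is a direct counterexample: it is an $\SU(2)$-simple knot of determinant $5$ with two binary dihedral classes, yet $\Delta_{4_1}(t)=t-3+t^{-1}$ has both roots real and positive, so the Tristram--Levine function never jumps and your sum would be empty rather than have two terms. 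The correct mechanism --- and the one the paper uses --- is the Casson--Lin invariant: the nondegeneracy supplied by Heusener--Klassen makes $h(K)$ an honest signed count of the $\tfrac{1}{2}(p-1)$ irreducible traceless conjugacy classes, so $|h(K)|\le\tfrac{1}{2}(p-1)$, and Lin's theorem $h(K)=\tfrac{1}{2}\sigma(K)$ yields $|\sigma(K)|\le p-1$. The congruence $\sigma(K)\equiv p-1\pmod 4$ needs no representation theory at all: it holds for every knot, by Murasugi's relation $\det(K)\equiv(-1)^{\sigma(K)/2}\pmod 4$ together with the evenness of $\sigma(K)$.
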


\begin{proof}
Since $K$ is $\SU(2)$-simple, all of the irreducible $\SU(2)$ representations $\rho: \pi_1(S^3\setminus K) \to \SU(2)$ with $\operatorname{Re}(\rho(\mu))=0$ are conjugate to binary dihedral ones, and again by \cite[Theorem~10]{klassen} there are $n=\frac{1}{2}(\det(K)-1)$ of these up to conjugacy.  Thus by \cite[Proposition~7.3]{zentner-simple}, we will have
\[ \dim_\Q I^\natural(K) = 2n+1 = \det(K), \]
where $I^\natural(K)$ is the reduced singular instanton knot homology of \cite{km-unknot}, if each such representation $\rho$ satisfies the following two conditions:
\begin{enumerate}[label=(\alph*)]
\item $\dim_\R H^1(S^3 \setminus K; \su(2)_\rho) = 1$, and 
\item the restriction map $H^1(S^3\setminus K; \su(2)_\rho) \to H^1(\mu; \su(2)_\rho)$ is onto. 
\end{enumerate}

If $\rho$ is one of these irreducible, binary dihedral representations, with image in $\{e^{i\theta}\} \cup \{e^{i\theta}j\}$, then one can extract from the proof of \cite[Theorem~10]{klassen} that it has finite image, of order dividing $4\det(K)=4p$.  (For completeness, we will work out the details explicitly in Lemma~\ref{lem:dihedral-image-link} and Remark~\ref{rem:dihedral-image-knot}.)  Since $\rho(\mu)$ is traceless it has order $4$, so then the order of $\Img(\rho)$ is also a multiple of 4, and it is not equal to $4$ or else $\Img(\rho)$ would be abelian.  Then $\Img(\rho)$ has order exactly $4p$ since $p$ is prime.  Now Proposition~\ref{prop:binary-dihedral-nondegenerate} below asserts that $\rho$ satisfies both of the desired conditions.

We deduce from this that $\dim_\Q I^\natural(K) = \det(K) = p$ after all, and then the isomorphism $I^\natural(K;\Q) \cong \KHI(K;\Q)$ of \cite[Proposition~1.4]{km-unknot} tells us that $\dim_\Q \KHI(K) = p$ as claimed.  These computations of the twisted cohomology of $S^3\setminus K$ also suffice to ensure that the Casson--Lin invariant $h(K)$, defined in \cite{lin-casson} and reinterpreted using gauge theory by Herald \cite{herald}, is in fact a signed count of the conjugacy classes of irreducible representations with $\operatorname{Re}(\rho(\mu)) = 0$.  It follows that
\[ |h(K)| \leq \tfrac{1}{2}\left(\det(K)-1\right), \]
but Lin \cite[Corollary~2.10]{lin-casson} proved that $h(K)=\frac{1}{2}\sigma(K)$, so this establishes the inequality $|\sigma(K)| \leq \det(K)-1$.  Finally, the congruence $\sigma(K) \equiv \det(K)-1 \pmod{4}$ actually holds for arbitrary knots $K \subset S^3$, as an immediate consequence of the evenness of $\sigma(K)$ and the relation
\begin{equation} \label{eq:det-sign}
\det(K) \equiv (-1)^{\sigma(K)/2} \pmod{4}
\end{equation}
of \cite[Theorem~5.6]{murasugi-signature}, so the proof is complete.
\end{proof}

The following proposition completes the missing part of Proposition~\ref{prop:odd-prime-determinant}.  The hard technical details of its proof are essentially contained in \cite{heusener-klassen}, though we have to do some work to extract them.

\begin{proposition} \label{prop:binary-dihedral-nondegenerate}
Let $K \subset S^3$ be a knot, and let $\rho: \pi_1(S^3 \setminus K) \to \SU(2)$ be a representation whose image is binary dihedral of order $4p$ for some odd prime $p$.  Then 
\begin{enumerate}[label=(\alph*)]
\item $\dim_\R H^1(S^3 \setminus K; \su(2)_\rho) = 1$, and \label{i:nondeg-1}
\item the restriction map $H^1(S^3\setminus K; \su(2)_\rho) \to H^1(\mu; \su(2)_\rho)$ is onto. \label{i:nondeg-2}
\end{enumerate}
\end{proposition}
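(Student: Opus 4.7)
My plan is to decompose $\su(2)_\rho$ under the adjoint action of $\Img(\rho)$, analyze each summand separately, and reduce the two-dimensional piece to the work of Heusener and Klassen. Let $X = S^3 \setminus K$. As a representation of $\Img(\rho)$, the adjoint representation splits orthogonally as $\su(2) = \R\cdot i \oplus V$, where $V = \R\langle j,k\rangle$. The one-dimensional summand carries the sign character $\epsilon:\pi_1(X) \to \{\pm1\}$ with $\epsilon(\mu) = -1$, i.e.\ the mod-$2$ reduction of the abelianization, while $V$ is an irreducible $2$-dimensional representation that factors through the dihedral quotient of order $2p$.

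First I would show that $H^1(X;\R\cdot i_\rho) = 0$. The double cover $X_2 \to X$ associated to $\epsilon$ is homotopy equivalent to $\Sigma_2(K) \setminus \tilde K$, where $\tilde K$ is the preimage of $K$ in the branched double cover; because $\det K = p \neq 0$, $\Sigma_2(K)$ is a rational homology sphere, and $H_1(X_2;\R) \cong \R$ is generated by a meridian of $\tilde K$. This meridian is fixed by the deck involution $\tau$, so $\tau$ acts trivially on $H^1(X_2;\R)$, and the transfer identifies $H^1(X;\R\cdot i_\rho)$ with the $(-1)$-eigenspace of $\tau$, which vanishes. A parallel calculation on the torus boundary shows that the $\R\cdot i_\rho$-summand also contributes nothing to $H^1(\mu;\su(2)_\rho)$, reducing both \ref{i:nondeg-1} and \ref{i:nondeg-2} to the summand $V$.

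For the summand $V$, complexify and restrict to $\pi_1(X_2)$: $V_\C$ splits as $\C_\chi \oplus \C_{\bar\chi}$ for a character $\chi:\pi_1(X_2) \to \C^*$ of order $p$. Since $\rho(\mu^2) = -1$ acts trivially on $V$, $\chi$ is trivial on the lifted meridian $\tilde\mu$, and so factors through $H_1(\Sigma_2(K);\Z) = \Z/p$ as a nontrivial $p$-th root of unity character. Shapiro's lemma then identifies $H^*(X;V_\C) \cong H^*(X_2;\C_\chi)$, and the analogous identification holds after restriction to the meridian. It therefore suffices to prove $\dim_\C H^1(X_2;\C_\chi) = 1$ together with surjectivity of the restriction $H^1(X_2;\C_\chi) \to H^1(\tilde\mu;\C_\chi)$.

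Both of these assertions are essentially the content of \cite{heusener-klassen}, once one verifies a simplicity condition on the (rational) Alexander module of $K$ at the $p$-th root of unity corresponding to $\chi$. This condition holds in our setting by the primality of $p = \det K$: since $H_1(\Sigma_2(K);\Z) = \Z/p$ admits a unique cyclic quotient of order $p$, the $\chi$-isotypic component of the Alexander module is one-dimensional, which is precisely the input required to run the deformation-theoretic arguments of \cite{heusener-klassen}. The main obstacle will be carefully extracting the boundary surjectivity \ref{i:nondeg-2} from \cite{heusener-klassen}, whose focus is primarily on the dimension count \ref{i:nondeg-1}; tracking the restriction map through the Shapiro isomorphism and matching it with the meridian restriction in the original statement will require some care.
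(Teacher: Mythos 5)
Your reduction is correct, and it is essentially a coordinate-free repackaging of what Heusener and Klassen (and the paper's proof) do by hand: the splitting $\su(2)_\rho = (\R\cdot i)_\epsilon \oplus V$ corresponds exactly to their separation of a cocycle's values into the $i$-components $x_\ell$ and the pairs $(s_\ell,t)$; your transfer argument correctly kills $H^1(S^3\setminus K;(\R\cdot i)_\epsilon)$ and its contribution to $H^1(\mu;\su(2)_\rho)$; and the Shapiro identification $H^*(S^3\setminus K;V_\C)\cong H^*(X_2;\C_\chi)$ is valid. So the structural part of your plan is sound and arguably cleaner than working with the full Wirtinger system.

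The problem is that you have not actually proved (b), and your closing sentence concedes as much. Surjectivity of the restriction map is not a stated theorem of \cite{heusener-klassen}; extracting it from their computation is the entire nontrivial content of this proposition, and the paper spends most of its proof doing exactly that. Concretely, the argument you are missing is the following dimension count: since $\dim H^1=1$ forces $\dim Z^1(S^3\setminus K;\su(2)_\rho)=4$, the Heusener--Klassen linear system \eqref{eq:hk-system} in the variables $(s_1,\dots,s_n,t)$ has a $3$-dimensional solution space, whereas the truncated system with $t=0$ injects into \eqref{eq:hk-system-simplified}, whose solution space has dimension at most $2$; hence some solution has $t\neq 0$, and $t\neq 0$ is precisely the condition that the corresponding cocycle $\xi$ has $\xi(\mu)$ with nonzero $j$-component, i.e.\ that its restriction to $\mu$ is not a coboundary. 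Be aware that the obvious shortcut does not close this gap: half-lives-half-dies only says that the image of $H^1(S^3\setminus K;\su(2)_\rho)$ in $H^1$ of the boundary torus, which is $2$-dimensional here, is a Lagrangian line, and every line in a $2$-dimensional symplectic space is Lagrangian, so duality alone does not exclude that image being the kernel of restriction to $\mu$. A smaller issue: your justification of the ``simplicity condition'' for (a) --- that $H_1(\Sigma_2(K);\Z)\cong\Z/p$ has a unique cyclic quotient of order $p$ --- counts the characters $\chi$ but says nothing about $\dim_\C H^1(X_2;\C_\chi)$; that bound genuinely requires the rank computation of \cite[\S3]{heusener-klassen}, so (a) stands as a citation but not for the reason you give.
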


\begin{proof}
Since $\rho: \pi_1(S^3\setminus K) \to \SU(2)$ has binary dihedral image of order $4$ times an odd prime, Heusener and Klassen \cite[\S3]{heusener-klassen} proved that $H^1(S^3\setminus K; \su(2)_\rho)$ is 1-dimensional, verifying \ref{i:nondeg-1}.  In fact, their proof also implies that the restriction map
\begin{equation} \label{eq:h1-restriction}
H^1(S^3\setminus K; \su(2)_\rho) \to H^1(\mu; \su(2)_\rho) \cong \R
\end{equation}
is onto, as we will now explain.  To set the stage, we assume without loss of generality that $\rho(\mu) = j$.  Then $H^1(S^3\setminus K; \su(2)_\rho)$ is the group $Z^1(S^3\setminus K; \su(2)_\rho)$ of cocycles, which consists of functions
\[ \xi: \pi_1(S^3\setminus K) \to \su(2) \]
such that
\[ \xi(gh)=\xi(g)+\Ad_{\rho(g)}\xi(h) \text{ for all }g,h, \]
modulo the group $B^1(S^3\setminus K;\su(2)_\rho)$ of coboundaries, which have the form
\begin{align*}
\zeta^\#: \pi_1(S^3\setminus K) &\to \su(2) \\
g &\mapsto \zeta - \Ad_{\rho(g)}\zeta
\end{align*}
for some $\zeta \in \su(2)$.  (Here we view $\su(2)$ as the vector space of purely imaginary quaternions.)  If $\zeta = ai+bj+ck$, then since $\rho(\mu)=j$ we have
\begin{align*}
\zeta^\#(\mu) &= (ai+bj+ck) - \Ad_j(ai+bj+ck) \\
&= (ai+bj+ck) - (-ai+bj-ck) = 2(ai+ck)
\end{align*}
so the $j$-component of $\zeta^\#(\mu)$ is zero.  It follows that if $\xi \in Z^1(S^3\setminus K;\su(2)_\rho)$ is a 1-cocycle for which $\xi(\mu)$ has nonzero $j$-component, then its restriction to an element of $Z^1(\mu;\su(2)_\rho)$ is not a coboundary, so this is how we will prove the surjectivity of \eqref{eq:h1-restriction}.

In order to determine $Z^1(S^3\setminus K; \su(2)_\rho)$, Heusener and Klassen take a Wirtinger presentation \cite[Equation~(5)]{heusener-klassen} of the form
\[ \pi_1(S^3\setminus K) = \langle S_1,\dots,S_n \mid S_{j_\ell}^{\epsilon_\ell}S_{\ell}S_{j_\ell}^{-\epsilon_\ell} = S_{\ell+1}, \ \ell=1,\dots,n-1 \rangle \]
with each $\epsilon_\ell\in\{\pm 1\}$, and turn this into a system of $n-1$ linear equations in $n+1$ real variables $s_1,\dots,s_n,t$, given in \cite[Equation~(14)]{heusener-klassen} as
\begin{equation} \label{eq:hk-system}
\alpha(j_\ell,\ell)s_{j_\ell} - s_\ell - s_{\ell+1} - \epsilon_{j_\ell}\beta(j_\ell,\ell)t = 0, \qquad 1 \leq \ell \leq n-1.
\end{equation}
Writing $\rho(S_\ell) = \zeta_\ell j$ for a complex unit $\zeta_\ell$ as in \cite[Equation~(6)]{heusener-klassen}, they show that a collection of values $\xi(S_\ell) = x_\ell i + y_\ell j + z_\ell k$ determines a cocycle if and only if
\begin{itemize}
\item we have $x_1 = x_2 = \dots = x_n$ (coming from \cite[Equation~(9)]{heusener-klassen}), and
\item if we write $y_\ell + z_\ell i = \zeta_\ell(t_\ell + s_\ell i)$, then $t_1 = t_2 = \dots = t_\ell$ have some common value $t$, and $(s_1,\dots,s_n,t)$ is a solution to \eqref{eq:hk-system}.
\end{itemize}
Now $\dim H^1(S^3\setminus K;\su(2)_\rho) = 1$ implies that $\dim Z^1(S^3\setminus K;\su(2)_\rho) = 4$, as explained after \cite[Remark~5]{heusener-klassen}, so the space of solutions to \eqref{eq:hk-system} is $3$-dimensional.

Suppose we fix $\mu = S_1$ and $\rho(\mu)=j$ as above, and let $\xi \in Z^1(S^3\setminus K; \su(2)_\rho)$ be a cocycle corresponding to a solution $(s_1,\dots,s_n,t)$ of \eqref{eq:hk-system}.  Following the above notation, we have $\zeta_1 = 1$ and so $y_1+z_1i = t_1+s_1i$; this means that the $j$-coordinate $y_1$ of $\xi(\mu)$ is nonzero if and only if $t_1=t$ is.  Thus it will suffice to find a solution of \eqref{eq:hk-system} with $t \neq 0$, since this will give a cocycle $\xi \in Z^1(S^3\setminus K; \su(2)_\rho)$ whose restriction to $Z^1(\mu;\su(2)_\rho)$ is not a coboundary.

The proof of \cite[Theorem~1]{heusener-klassen} concludes by determining that the simpler system
\begin{equation} \label{eq:hk-system-simplified}
\alpha(j_\ell,\ell)s_{j_\ell} - s_\ell - s_{\ell+1} = 0, \qquad 1 \leq \ell \leq n-1
\end{equation}
of $n-1$ equations in $n$ variables has at most a 2-dimensional solution space (its rank is at least $n-2$).  If every solution to \eqref{eq:hk-system} has $t=0$, then it follows that the space of solutions to \eqref{eq:hk-system} is also at most 2-dimensional, since (by forgetting the $t$ coordinate) it injects into the space of solutions to \eqref{eq:hk-system-simplified}.  But we know that \eqref{eq:hk-system} actually has a 3-dimensional solution space, so there must be a solution with $t\neq 0$.  Thus the map \eqref{eq:h1-restriction} must be onto after all, verifying \ref{i:nondeg-2}.
\end{proof}

We can use Proposition~\ref{prop:odd-prime-determinant} to say a bit more about the instanton knot homology of knots which satisfy its hypotheses.  We first recall that if $K$ has genus $g$, then its instanton knot homology comes with a symmetric Alexander decomposition
\[ \KHI(K) = \bigoplus_{i=-g}^g \KHI(K,i), \]
each of whose summands has a natural $\Z/2\Z$-grading.  Kronheimer--Mrowka \cite{km-alexander} and Lim \cite{lim} proved that this recovers the Alexander polynomial $\Delta_K(t)$, by the formula
\begin{equation} \label{eq:khi-alexander}
\pm \Delta_K(t) = \sum_{i=-g}^g \chi(\KHI(K,i)) \cdot t^i,
\end{equation}
where the sign ambiguity comes from using different conventions for the grading.  

\begin{theorem} \label{thm:simple-alexander}
Let $K \subset S^3$ be an $\SU(2)$-simple knot for which $\det(K)$ is prime.  Then each Alexander summand $\KHI(K,i)$ is supported in a single $\Z/2\Z$ grading.  Moreover, if $K$ has genus $g$ and Alexander polynomial
\[ \Delta_K(t) = \sum_{i=-g}^g a_i t^i, \]
then the following must be true:
\begin{itemize}
\item $a_i = (-1)^{i+\sigma(K)/2} \dim_\Q \KHI(K,i)$ for all $i$, and
\item $|a_g| \geq 1$, with equality if and only if $K$ is fibered.
\end{itemize}
\end{theorem}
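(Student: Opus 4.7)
The plan is to extract the entire theorem from a single extremal inequality comparing the Alexander coefficients of $K$ to the dimensions of the Alexander summands of $\KHI(K)$. Set $p = \det(K)$, write $b_i = \chi(\KHI(K,i))$, and let $d_i = \dim_\Q \KHI(K,i)$. The main inputs are Proposition~\ref{prop:odd-prime-determinant}, giving $\sum_i d_i = \dim_\Q \KHI(K) = p$, and the Euler characteristic formula \eqref{eq:khi-alexander} relating $\Delta_K$ to the $b_i$, together with the elementary identity $\det(K) = |\Delta_K(-1)|$.

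The key step is the chain of inequalities
\[ p = |\Delta_K(-1)| = \left|\sum_i (-1)^i b_i\right| \leq \sum_i |b_i| \leq \sum_i d_i = p, \]
in which the first inequality is the triangle inequality and the second uses $|b_i| \leq d_i$, since $b_i$ is the signed count of the $\Z/2\Z$-graded parts of $\KHI(K,i)$. Every inequality must therefore be an equality. The condition $|b_i| = d_i$ forces $\KHI(K,i)$ to be concentrated in a single $\Z/2\Z$-grading, which is the first assertion of the theorem. Equality in the triangle inequality forces the nonzero terms $(-1)^i b_i$ to share a common sign $\epsilon \in \{\pm 1\}$, so $b_i = \epsilon(-1)^i d_i$ for every $i$.

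To pin down $\epsilon$ and obtain the second bullet, I would compare $\sum_i (-1)^i b_i = \epsilon p$ against the sign refinement of \eqref{eq:det-sign} which asserts $\Delta_K(-1) = (-1)^{\sigma(K)/2} \det(K)$ for the symmetrized Alexander polynomial. Fixing the sign convention in \eqref{eq:khi-alexander} consistently with this normalization yields $\epsilon = (-1)^{\sigma(K)/2}$ and hence $a_i = (-1)^{i + \sigma(K)/2} d_i$. The main subtlety is the sign bookkeeping between the $\pm$ ambiguity in \eqref{eq:khi-alexander} and the sign of $\Delta_K(-1)$; this is a finite check rather than a genuine obstacle, but it is the step that requires the most care.

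The final bullet then reduces to two well-known features of instanton knot homology. Once $|a_g| = d_g$, Kronheimer--Mrowka's detection of the Seifert genus guarantees $\KHI(K,g) \neq 0$, hence $|a_g| \geq 1$; and their detection of fiberedness via the top Alexander grading of $\KHI$ gives $\dim_\Q \KHI(K,g) = 1$ if and only if $K$ is fibered. I would invoke these directly as black boxes to conclude.
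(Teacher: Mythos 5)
Your proposal is correct and follows essentially the same route as the paper: the same triangle-inequality/extremal argument pinning $|\chi(\KHI(K,i))| = \dim_\Q \KHI(K,i)$ with a common sign, the same resolution of the sign ambiguity in \eqref{eq:khi-alexander} via $\Delta_K(-1) = (-1)^{\sigma(K)/2}\det(K)$ (which the paper derives from \eqref{eq:det-sign} together with $\Delta_K(-1)\equiv\Delta_K(1)=1\pmod 4$, but which you may equally well quote as known), and the same appeals to Kronheimer--Mrowka's genus and fiberedness detection for the final bullet.
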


\begin{proof}
Proposition~\ref{prop:odd-prime-determinant} says that $\dim_\Q \KHI(K) = \det(K)$.  Now substituting $t=-1$ into \eqref{eq:khi-alexander}, taking absolute values, and applying the triangle inequality gives
\bgroup
\allowdisplaybreaks
\begin{align*}
\det(K) &= \left| \sum_{i=-g}^g \chi(\KHI(K,i))\cdot(-1)^i \right| \\
&\leq \sum_{i=-g}^g \left|(-1)^i\chi(\KHI(K,i))\right| \\
&\leq \sum_{i=-g}^g \dim_\Q \KHI(K,i) \\
&= \det(K).
\end{align*}
\egroup
In particular, equality must hold in both of the inequalities above.  We deduce from the second one that $|\chi(\KHI(K,i))| = \dim_\Q \KHI(K,i)$ for each $i$, so each $\KHI(K,i)$ is supported in a single mod 2 grading.  The first one says that all of the nonzero terms $(-1)^i \chi(\KHI(K,i))$ must have the same sign, say $(-1)^{\delta_0}$, and then the mod 2 grading of each $\KHI(K,i)$ is $i+\delta_0$.  Then
\[ \chi(\KHI(K,i)) = (-1)^{i+\delta_0} \dim_\Q \KHI(K,i), \]
so we substitute this information back into \eqref{eq:khi-alexander} to get
\[ \pm\Delta_K(t) = \sum_{i=-g}^g \left((-1)^{i+\delta_0} \dim_\Q \KHI(K,i)\right) t^i. \]
Let $\delta$ be either $\delta_0$ or $\delta_0+1$, depending on whether the left side is $+\Delta_K(t)$ or $-\Delta_K(t)$.  Then
\[ a_i = (-1)^{i+\delta} \dim_\Q \KHI(K,i) \]
for all $i$, and so we have
\[ \Delta_K(-1) = \sum_{i=-g}^g (-1)^i a_i = (-1)^\delta \sum_{i=-g}^g \dim_\Q \KHI(K,i) = (-1)^\delta \cdot \det(K). \]
But we also know in general that $\Delta_K(-1) = (-1)^{\sigma(K)/2} \det(K)$: this follows from the congruence \eqref{eq:det-sign} and the fact that
\[ \Delta_K(-1) = a_0 + \sum_{i=1}^g\left( (-1)^i + (-1)^{-i} \right)a_i = a_0 + 2 \sum_{i=1}^g (-1)^i a_i \]
is congruent modulo 4 to $a_0 + 2\sum_{i=1}^g a_i = \Delta_K(1) = 1$.  Thus we have $(-1)^\delta = (-1)^{\sigma(K)/2}$, completing the determination of $a_i$ in terms of $\KHI(K,i)$.

The last detail is the claim about $|a_g|$, which is equal to $\dim_\Q \KHI(K,g)$.  We know that this dimension is strictly positive \cite[Proposition~7.6]{km-excision}, and that equality holds if and only if $K$ is fibered \cite[Proposition~4.1]{km-alexander}, so the claim follows immediately.
\end{proof}

%
%

\section{Symplectic fillings of $\SU(2)$-abelian manifolds} \label{sec:fillings}

An \emph{instanton L-space} is a rational homology 3-sphere $Y$ whose framed instanton homology has the smallest possible rank, namely $\dim_\Q I^\#(Y) = |H_1(Y)|$.  By analogy with Heegaard Floer homology \cite[Theorem~1.4]{osz-genus}, we expect that any symplectic filling of an instanton L-space should have negative definite intersection form.  We do not prove this here, but in this section we will prove this claim for a restricted class of instanton L-spaces which will suffice for our purposes.  We begin with the following definition.

\begin{definition}[\cite{boyer-nicas}] \label{def:cyclically-finite}
An $\SU(2)$-abelian rational homology $3$-sphere $Y$ has \emph{cyclically finite} fundamental group if for every representation
\[ \rho: \pi_1(Y) \to \SU(2), \]
the finite cover $\tilde{Y}$ whose fundamental group $\pi_1(\tilde{Y})$ is the kernel of 
\[ \ad\rho: \pi_1(Y) \to \SO(3) \]
is a rational homology sphere.
\end{definition}

For example, in Definition~\ref{def:cyclically-finite} it suffices to know that the universal abelian cover of $Y$ is a rational homology sphere.  The following are Proposition~4.9 and Corollary~4.10 of \cite{bs-stein}.

\begin{lemma}[\cite{bs-stein}] \label{lem:cyclically-finite}
Suppose that $Y$ is a $\SU(2)$-abelian rational homology sphere, and that $|H_1(Y)| \leq 5$ or that $H_1(Y)$ is cyclic of order a prime power.  Then $\pi_1(Y)$ is cyclically finite.
\end{lemma}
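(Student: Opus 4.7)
The plan is to analyze the structure of the cover $\tilde{Y}$ forced by the definition of cyclically finite, reduce being a rational homology sphere to a twisted-homology vanishing, and then exploit the arithmetic restrictions on $H_1(Y)$.

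First I would pin down what $\tilde Y$ must look like. The $\SU(2)$-abelianness of $Y$ ensures that any $\rho:\pi_1(Y)\to\SU(2)$ has abelian image, and every abelian subgroup of $\SU(2)$ lies in a maximal torus $T\cong S^1$, since the centralizer of any non-central element of $\SU(2)$ is already such a torus. Thus $\rho$ factors through $\pi_1(Y)\twoheadrightarrow H_1(Y)\to T$. The adjoint map carries $T$ onto a maximal torus of $\SO(3)$ with kernel $\{\pm 1\}$, so $\ad\rho$ likewise factors through $H_1(Y)\to S^1$ with finite cyclic image $C_d$. Therefore $\tilde{Y}\to Y$ is a regular $d$-fold cyclic cover, factoring through the universal abelian cover of $Y$, with $d$ dividing $|H_1(Y)|$.

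Next I would dispose of the non-cyclic case. Among the groups allowed by $|H_1(Y)|\le 5$, the only non-cyclic possibility is $H_1(Y)\cong(\Z/2)^2$. But the only elements of order dividing $2$ in $\SU(2)$ are $\pm 1$, so in this case $\rho$ has image in $\{\pm 1\}$, hence $\ad\rho$ is trivial and $\tilde Y=Y$, which is a $\Q$HS by hypothesis. So we may reduce to the case that $H_1(Y)$ is cyclic of prime-power order $p^k$, and then $d=p^j$ for some $j\le k$.

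Finally I would show that the resulting $p^j$-fold cyclic cover $\tilde{Y}$ is a $\Q$HS. Since $\tilde{Y}$ is a closed oriented $3$-manifold, this reduces to $H_1(\tilde{Y};\C)=0$. Decomposing $H_1(\tilde{Y};\C)$ into $C_d$-isotypic components, the trivial character summand equals $H_1(Y;\C)=0$ by the transfer, while for each nontrivial character $\chi:C_d\to\C^*$ the $\chi$-isotypic summand is naturally the twisted first homology $H_1(Y;\C_\chi)$ of the one-dimensional representation of $\pi_1(Y)$ pulled back from $\chi$. The main obstacle is showing these twisted homologies vanish: the cleanest route seems to be to work with the Reidemeister torsion (equivalently, the Alexander polynomial of the universal abelian cover) of $Y$ at $p$-power roots of unity, where the cyclic prime-power structure of $H_1(Y)$ makes the torsion particularly tractable, or else to argue indirectly by using $\SU(2)$-abelianness to obstruct the non-abelian lifts of $\pi_1(Y)\to\SU(2)$ that a nontrivial $H_1(\tilde{Y};\C)$ would manufacture from surjections $\pi_1(\tilde Y)\twoheadrightarrow\Z$.
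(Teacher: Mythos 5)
Your proposal does not reach the finish line, and the place where it stops is exactly where all of the mathematical content of this lemma lives. (Note also that the paper does not prove this statement at all: it is quoted verbatim from \cite{bs-stein} as Proposition~4.9 and Corollary~4.10, so the relevant comparison is with the proof given there.) Your reductions are fine and match the standard setup: since $Y$ is $\SU(2)$-abelian, $\rho$ factors through $H_1(Y)$ into a maximal torus, so $\ad\rho$ has cyclic image of order $d$ dividing $|H_1(Y)|$ and $\tilde Y$ is a $d$-fold cyclic cover; the non-cyclic case $H_1(Y)\cong (\Z/2)^2$ is trivial because then $\rho$ lands in $\{\pm1\}$; and $b_1(\tilde Y)=\sum_{\chi\neq 1}\dim_\C H_1(Y;\C_\chi)$ with the trivial-character piece killed by the transfer. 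But then you write that ``the main obstacle is showing these twisted homologies vanish'' and offer only two one-sentence directions without carrying either out. That is the theorem, not a detail to be deferred.

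Of your two suggested routes, the first (Reidemeister torsion, using only the arithmetic of $H_1(Y)$) cannot work: there is no purely homological reason that $H_1(Y;\C_\chi)$ must vanish for a nontrivial character $\chi$ of prime-power order, even when $H_1(Y)$ is cyclic of prime-power order. (The analogous statement for knots --- Fox's theorem that $\Delta_K$ has no zeros at prime-power roots of unity, so $p^k$-fold branched covers are rational homology spheres --- uses $\Delta_K(1)=\pm1$, a constraint with no counterpart for a general rational homology sphere $Y$.) The $\SU(2)$-abelian hypothesis must therefore do real work in this step, and your sketch does not say how. Your second route is the correct one and is what \cite{bs-stein} actually does: assuming some $H_1(Y;\C_\chi)\neq 0$, one produces a deck-group-equivariant surjection from $H_1(\tilde Y)$ onto a finite abelian group with nontrivial $\Z/d$-action, hence a surjection of $\pi_1(Y)$ onto a non-abelian metabelian group, and one must then show that such a quotient forces a representation $\pi_1(Y)\to\SU(2)$ with non-abelian image --- contradicting $\SU(2)$-abelianness. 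It is precisely in this last conversion that the hypotheses ``$d$ is a prime power'' or ``$|H_1(Y)|\leq 5$'' are used (an arbitrary non-abelian metabelian quotient need not admit a non-abelian $\SU(2)$ representation; one has to control which semidirect products can arise and check they embed suitably, passing through $\SO(3)$ and a lifting argument where needed). None of this appears in your proposal, so as written it is a correct reduction followed by a restatement of the problem.
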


In particular, \cite[Theorem~4.6]{bs-stein} says that if $Y$ is an $\SU(2)$-abelian rational homology sphere and $\pi_1(Y)$ is cyclically finite, then $Y$ is an instanton L-space.  We now prove an analogue of \cite[Theorem~1.4]{osz-genus} for such manifolds.

\begin{proposition} \label{prop:fillings}
Let $Y$ be an $\SU(2)$-abelian rational homology sphere, and suppose that $\pi_1(Y)$ is cyclically finite.  Then any simply connected, weak symplectic filling $(X,\omega)$ of a contact structure on $Y$ must satisfy $b^+_2(X) = 0$.
\end{proposition}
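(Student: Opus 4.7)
The plan is to argue by contradiction. Suppose that $(X,\omega)$ is a simply connected weak symplectic filling of a contact structure $\xi$ on $Y$ with $b^+_2(X)\geq 1$.

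First, the hypotheses on $Y$ combined with \cite[Theorem~4.6]{bs-stein} imply that $Y$ is an instanton L-space, so $\dim_\Q \Is(-Y) = |H_1(Y)|$. Since $H^2(Y;\R)=0$, after a deformation of $\omega$ I may assume that $(X,\omega)$ is in fact a strong symplectic filling, which is the setting in which the standard contact-invariant technology is most cleanly available.

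Next, I would appeal to the non-vanishing of the framed instanton contact invariant $\fiinvt(\xi) \in \Is(-Y)$ of a strong symplectic filling, together with a naturality statement identifying it with the image of the canonical generator of $\Is(-S^3)\cong\Q$ under the cobordism map
\[ \Is(X \smallsetminus B^4)\colon \Is(-S^3) \to \Is(-Y) \]
induced by viewing $X\smallsetminus B^4$ as a cobordism from $S^3$ to $Y$. A convenient way to encode the L-space hypothesis here is to split $\Is(-Y)$ (after extending coefficients) into a direct sum of one-dimensional summands indexed by the characters $\chi\colon H_1(Y) \to S^1$; cyclical finiteness is what allows this decomposition, by ensuring that the relevant finite abelian covers of $Y$ are rational homology spheres so that the twisted-coefficient computation matches the rank count from Step 1.

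The main step, and the expected main obstacle, is to show that $b^+_2(X)\geq 1$ forces this cobordism map to vanish on every such summand, contradicting the non-vanishing of $\fiinvt(\xi)$. The argument should be a Donaldson-theoretic dimension count: the relevant moduli spaces of anti-self-dual connections on $X \smallsetminus B^4$ with prescribed holonomy at the $Y$-end have expected dimension shifted by $-(1+b^+_2(X))$ relative to the $b^+_2 = 0$ case, so any contribution to the $0$-dimensional count defining the cobordism map must come from reducible connections. The technical heart of the argument is controlling these reducibles: because $Y$ is $\SU(2)$-abelian every flat $\SU(2)$ connection on $Y$ is reducible, and cyclical finiteness together with $\pi_1(X)=1$ is exactly what is needed to arrange, for a generic perturbation, that the reducible contributions cancel character-by-character. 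Putting this together gives $\Is(X\smallsetminus B^4)=0$ and hence $\fiinvt(\xi)=0$, contradicting the previous step and forcing $b^+_2(X)=0$.
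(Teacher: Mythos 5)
Your proposal takes a genuinely different route from the paper, but it rests on two inputs that are not available in the instanton literature, and the second one is a real gap rather than a citation problem. First, the non-vanishing of a framed instanton contact invariant $\fiinvt(\xi) \in \Is(-Y)$ for weakly (or even strongly) fillable contact structures, together with the naturality statement identifying it with the image of the generator under the cobordism map of $X \smallsetminus B^4$, is not established: the known non-vanishing results in the instanton setting require Stein fillability, and there is no analogue of the twisted-coefficient argument that handles weak fillings in Heegaard Floer theory. Second, and more seriously, there is no vanishing theorem for framed instanton cobordism maps when $b^+_2 > 0$. The Heegaard Floer proof of the corresponding statement factors $\widehat{F}_W$ through $\mathit{HF}^\infty$ and uses admissibility; none of that structure exists for $\Is$. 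Your proposed dimension count does not substitute for it: the moduli spaces defining $\Is(X\smallsetminus B^4)$ are cut out to have a fixed dimension regardless of $b^+_2(X)$, so nothing is ``shifted by $-(1+b^+_2(X))$.'' What $b^+_2(X)>0$ actually buys in Donaldson theory is genericity (avoiding reducibles on the four-manifold), not vanishing; the vanishing mechanism for split manifolds comes instead from the extra gluing parameters contributed by the stabilizers of reducible flat connections on the separating three-manifold, and the theorems implementing this (the connected-sum theorem and its Morse--Bott generalization by Austin--Braam \cite{austin-braam}) are statements about \emph{closed} four-manifolds.

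The paper's proof is arranged precisely so that only closed-manifold technology is needed. It caps off $X$ by attaching a Weinstein handle along a Legendrian $T_{2,5}$ and then an Eliashberg concave cap \cite{eliashberg-cap}, producing a simply connected piece $Z$ with $b^+_2(Z)>0$ and a closed symplectic four-manifold $W = X \cup_Y Z$. The Donaldson invariants $D^w_W(h^k)$ of a closed symplectic manifold are nonzero for large $k$ \cite{km-excision,sivek-donaldson}; on the other hand, $\SU(2)$-abelian plus cyclically finite says exactly that every flat connection on $Y$ is reducible and the Chern--Simons functional is Morse--Bott, so Austin--Braam's vanishing theorem applies to the splitting $W = X \cup_Y Z$ and kills $D^w_W((nh)^k) = n^k D^w_W(h^k)$ with $n = |H_1(Y)|$. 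That contradiction is the whole proof. If you want to salvage your approach, you would essentially have to prove a relative version of the Austin--Braam gluing result for the specific moduli spaces defining $\Is$ of a cobordism, which is a substantial project rather than a routine adaptation.
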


\begin{proof}
Let $(X,\omega)$ be a weak symplectic filling of $(Y,\xi)$, with $\pi_1(X) = 1$, and suppose that $b^+_2(X) \geq 1$.  We attach a Weinstein 2-handle $H$ along a Legendrian torus knot $T_{2,5}$ with Thurston--Bennequin invariant $3$, inside a Darboux ball $B^3 \subset Y$, to get a symplectic cobordism to a new $(Y',\xi')$.  Then following Eliashberg \cite{eliashberg-cap}, we can construct a concave symplectic filling $(Z_0,\omega_0)$ of $(Y',\xi')$ from an open book decomposition, by first performing $0$-surgery on the binding $B$ and then extending the symplectic form across a Lefschetz fibration with concave boundary $Y'_0(B)$.  Kronheimer and Mrowka \cite[Lemma~11]{km-p} observed that one can take $H_1(Z_0)=0$ by choosing a collection of vanishing cycles which generate the homology of the base, but the same argument with $\pi_1$ instead of $H_1$ allows us to take $Z_0$ to be simply connected.

With this in mind, we write $Z = H \cup_{Y'} Z_0$, and note that $Z$ contains a surface of positive self-intersection: this is the core of the handle $H$, glued to a Seifert surface for the attaching curve $T_{2,5}$ in $Y$.   Thus we have a closed symplectic manifold $W = X \cup_Y Z$ such that
\begin{itemize}
\item both $X$ and $Z$ are simply connected, and
\item both $b^+_2(X)$ and $b^+_2(Z)$ are strictly positive.
\end{itemize}
We choose a class $w\in H^2(W;\Z)$ and a symplectic form on $W$ with integral cohomology class, and if the latter class has Poincar\'e dual $h \in H_2(W;\Z)$ then we know from \cite{km-excision,sivek-donaldson} that the Donaldson invariants
\[ D^w_W(h^k) \]
are nonzero for all large enough $k$ in some residue class mod 4.

On the other hand, the assumption that $Y$ is $\SU(2)$-abelian means that all critical points of the Chern--Simons functional on $Y$ are reducible, and in fact the Chern--Simons functional on $Y$ is Morse--Bott since $\pi_1(Y)$ is cyclically finite \cite[Corollary~4.5]{bs-stein}.  Under these circumstances, Austin and Braam \cite[Proposition~6.3]{austin-braam} proved that any Donaldson invariant of the form $D^w_W(x^k)$ must vanish, where $x \in H_2(W;\Z)$ is a class whose restriction to $H_1(Y)$ is zero.  Letting $n = |H_1(Y)|$, the restriction of $x=nh$ to $H_1(Y)$ is certainly zero, so the Donaldson invariant
\[ D^w_W\big( (nh)^k \big) = n^k \cdot D^w_W(h^k) \]
must be zero as well, but this contradicts the nonvanishing result mentioned above.  It follows that we must have had $b^+_2(X) = 0$ after all.
\end{proof}

\begin{remark}
In the proof of Proposition~\ref{prop:fillings}, we can choose the class $w \in H^2(W;\Z)$ to be nonzero mod $2$ on each of $X$ and $Z$, and then the desired vanishing for the invariants $D^w_W(h^k)$ follows equally well from the ``dimension counting argument'' outlined by Donaldson following the statement of \cite[Theorem~4.9]{donaldson-polynomial}.  The key facts we need to carry out this argument are that every flat $\SU(2)$ connection $A$ on $Y$ is reducible and nondegenerate (meaning that $H^1_A(Y) = 0$), or in other words that $Y$ is $\SU(2)$-abelian and $\pi_1(Y)$ is cyclically finite.  This allows us to glue the relevant moduli spaces of instantons over $X$ and $Z$, each with the same asymptotic limit at $Y$, as described for example in \cite[\S4.4.1]{donaldson-book}.
\end{remark}

We remark that Proposition~\ref{prop:fillings} should apply equally well to fillings that are not simply connected, but we do not need this more general statement here.  We also observe the following corollary, cf.\ \cite[Theorem~2.1]{kmos} or \cite[Theorem~1.4]{osz-genus}.

\begin{corollary} \label{cor:taut-foliations}
Let $Y$ be an $\SU(2)$-abelian rational homology sphere.  If $\pi_1(Y)$ is cyclically finite, then $Y$ does not admit a co-orientable taut foliation.
\end{corollary}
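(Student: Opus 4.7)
The plan is to argue by contradiction using the cap-off construction from the proof of Proposition~\ref{prop:fillings}. Assuming $Y$ admits a co-orientable taut foliation $\mathcal{F}$, I would first invoke Eliashberg--Thurston to perturb $\mathcal{F}$ to a contact structure $\xi$ on $Y$ that admits a weak symplectic filling $(X,\omega)$, with $X$ diffeomorphic to $Y \times [0,1]$.

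Next, mirroring the construction in the proof of Proposition~\ref{prop:fillings}, I would attach a Weinstein $2$-handle $H$ to the convex side of $X$ along a Legendrian $T_{2,5}$ with $\ltb=3$ in a Darboux ball in $Y$, and then cap off the resulting convex boundary with a simply-connected concave filling $Z_0$ built from an open book decomposition. The combined cap $Z = H \cup Z_0$ is simply connected and contains a closed surface of self-intersection $+2$, so $b_2^+(Z) \geq 1$, making the closed $4$-manifold $W = X \cup_Y Z$ a symplectic manifold with $b_2^+(W) \geq 1$ and $Y$ as a separating hypersurface.

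The Donaldson-invariant argument then runs just as in the proof of Proposition~\ref{prop:fillings}. For a suitable class $w \in H^2(W;\Z)$ and a Poincar\'e dual $h \in H_2(W;\Z)$ to the symplectic class, the invariants $D^w_W(h^k)$ are nonzero for all large enough $k$ in some residue class mod $4$ by \cite{km-excision,sivek-donaldson}. On the other hand, since $Y$ is $\SU(2)$-abelian with cyclically finite fundamental group, every flat $\SU(2)$-connection on $Y$ is reducible and nondegenerate, so by Austin--Braam (or equivalently the dimension-counting alternative described in the remark after Proposition~\ref{prop:fillings}), $D^w_W(x^k) = 0$ whenever $x \in H_2(W;\Z)$ restricts to zero in $H_1(Y;\Z)$. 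Setting $x = nh$ with $n = |H_1(Y)|$ then yields $n^k \cdot D^w_W(h^k) = D^w_W((nh)^k) = 0$, contradicting the nonvanishing above.

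The main obstacle is that the Eliashberg--Thurston filling $X = Y \times [0,1]$ is not simply connected, so Proposition~\ref{prop:fillings} does not apply verbatim. However, the vanishing portion of the argument depends only on the fact that all flat $\SU(2)$-connections on the separating hypersurface $Y$ are reducible and nondegenerate, not on the topology of the filling, so the conclusion still goes through---this is essentially the extension of Proposition~\ref{prop:fillings} indicated in the preceding remark. The main technical point to verify carefully is that the gluing of instanton moduli spaces across $Y$ behaves as expected in this non-simply-connected setting.
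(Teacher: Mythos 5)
Your strategy is the right one in spirit, but as written it has a concrete gap: the $4$-manifold $W = X \cup_Y Z$ you build is not closed. The Eliashberg--Thurston perturbation produces a symplectic structure on $Y \times [0,1]$ in which \emph{both} boundary components are weakly convex, i.e.\ a weak semi-filling of $(Y,\xi) \sqcup (-Y,\xi')$ rather than a filling of $(Y,\xi)$ alone. Gluing the cap $Z = H \cup Z_0$ to the $Y \times \{1\}$ end still leaves the boundary component $Y \times \{0\}$, so $W$ has nonempty boundary, the Donaldson invariants $D^w_W(h^k)$ are not defined, and the vanishing-versus-nonvanishing comparison cannot be run. You must also cap off the $Y \times \{0\}$ end. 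Once you do so with a simply connected Eliashberg cap (arranged, via a Weinstein handle on a Legendrian $T_{2,5}$ on that side, to contain a surface of positive square), van Kampen's theorem shows that the union of that cap with $Y \times [0,1]$ is a \emph{simply connected} weak symplectic filling of $(Y,\xi)$ with $b_2^+ > 0$. This is exactly what the paper does, following \cite[Proposition~15]{km-p}: it then quotes Proposition~\ref{prop:fillings} verbatim instead of re-running its proof.

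Your proposed workaround---asserting that the Austin--Braam vanishing and the gluing of instanton moduli spaces along $Y$ are insensitive to $\pi_1$ of the two pieces, so that the argument of Proposition~\ref{prop:fillings} applies to the non-simply-connected piece $Y \times [0,1]$---is precisely the generalization that the remark following Proposition~\ref{prop:fillings} mentions and explicitly declines to establish (``we do not need this more general statement here''). Leaning on it leaves your proof resting on an unproved extension, and it also costs you positivity of $b_2^+$ on the filling side (since $b_2^+(Y\times[0,1])=0$), which the proof of Proposition~\ref{prop:fillings} uses both to get $b_2^+(W) \geq 2$ for the nonvanishing theorem and to control reducibles over each piece. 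The fix is to cap both convex ends and cut along $Y$ so that all of $Y \times [0,1]$ lies on the concave side; then the convex side is a simply connected filling and Proposition~\ref{prop:fillings} applies as stated.
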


\begin{proof}
If $Y$ has a co-orientable taut foliation then we can perturb it to a weakly fillable contact structure \cite{eliashberg-thurston,bowden-approximating,kazez-roberts-approximating}, and use \cite{eliashberg-cap} to construct a simply connected weak symplectic filling of $(Y,\xi)$ with $b^+ > 0$, exactly as in the proof of \cite[Proposition~15]{km-p}.  This contradicts Proposition~\ref{prop:fillings}.
\end{proof}

\section{Homology of orders 3 and 5} \label{sec:35}

We can now prove the second case of Theorem~\ref{thm:heegaard-genus-2}, which asserts that if $Y$ has Heegaard genus 2 and first homology of order 3, then it is not $\SU(2)$-abelian.  Here we will need the fact, proved in \cite{bs-trefoil}, that the dimension of $\KHI$ is enough to detect the trefoils.

\begin{lemma} \label{lem:su2-simple-det-3}
If $K \subset S^3$ is an $\SU(2)$-simple knot of determinant $3$, then $K$ is a trefoil.
\end{lemma}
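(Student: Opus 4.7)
The plan is to combine Proposition~\ref{prop:odd-prime-determinant} with the trefoil-detection result from \cite{bs-trefoil}. First, because $K$ is $\SU(2)$-simple and $\det(K)=3$ is an odd prime, Proposition~\ref{prop:odd-prime-determinant} applies directly and gives $\dim_\Q \KHI(K) = 3$. The cited detection theorem then says that any knot in $S^3$ whose instanton knot homology has total dimension $3$ must be a trefoil, which finishes the proof.

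All of the heavy lifting is already packaged into Proposition~\ref{prop:odd-prime-determinant} (itself resting on Klassen's count of binary-dihedral representations and on the nondegeneracy statement of Proposition~\ref{prop:binary-dihedral-nondegenerate}) and into the $\KHI$-dimension detection of trefoils from \cite{bs-trefoil}; given these two inputs, the lemma is essentially a two-line deduction. The only thing to check in the reduction itself is that the $\SU(2)$-simplicity and prime-determinant hypotheses feed into Proposition~\ref{prop:odd-prime-determinant}, and both are immediate from the lemma's assumptions.

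It is worth noting that Theorem~\ref{thm:simple-alexander} also gives partial information along the way: the constraints $\sum_i \dim \KHI(K,i) = 3$, $\dim \KHI(K,i) = \dim \KHI(K,-i)$, and the sign rule $a_i = (-1)^{i+\sigma(K)/2}\dim \KHI(K,i)$ are compatible with the trefoil profile $\dim \KHI(K,-1) = \dim \KHI(K,0) = \dim \KHI(K,1) = 1$, which together with the fibered criterion of Theorem~\ref{thm:simple-alexander} would pin $K$ down as a fibered genus-one knot of determinant $3$. However, this elementary bookkeeping does not by itself rule out more exotic Alexander-grading distributions summing to $3$ at larger genera, so the cleanest route is to cite the detection theorem directly as above; that is what I anticipate the authors will do.
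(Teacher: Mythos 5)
Your proposal matches the paper's proof exactly: apply Proposition~\ref{prop:odd-prime-determinant} to conclude $\dim_\Q \KHI(K) = 3$, then invoke the trefoil-detection theorem of \cite{bs-trefoil} (their Theorem~1.6). The extra remarks about Theorem~\ref{thm:simple-alexander} are not needed, and the paper does not use them here.
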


\begin{proof}
Proposition~\ref{prop:odd-prime-determinant} says that $\dim_\Q \KHI(K) = 3$, and by \cite[Theorem~1.6]{bs-trefoil} the only knots with three-dimensional instanton knot homology are the trefoils.
\end{proof}

\begin{proof}[Proof of Theorem~\ref{thm:heegaard-genus-2} in the case $|H_1(Y)|=3$]
Suppose that $Y$ has Heegaard genus 2 and is $\SU(2)$-abelian, and that $H_1(Y)$ has order $3$.  By Proposition~\ref{prop:dcover-map}, we can write $Y \cong \dcover(K)$ for an $\SU(2)$-simple 3-bridge knot $K$ of determinant 3.  But Lemma~\ref{lem:su2-simple-det-3} says that there are no such knots, since the trefoils have bridge index 2, so such $Y$ cannot exist.
\end{proof}

We cannot quite prove the analogue of Lemma~\ref{lem:su2-simple-det-3} for knots of determinant $5$, but we can achieve a partial characterization.  We recall a bit of terminology first: an \emph{instanton L-space knot} is a knot in $S^3$ on which some positive Dehn surgery produces an instanton L-space.  Such knots are known to be fibered and strongly quasipositive \cite[Theorem~1.15]{bs-lspace}.

\begin{lemma} \label{lem:su2-simple-det-5}
Let $K$ be an $\SU(2)$-simple knot of determinant $5$ other than the figure eight or $T_{\pm2,5}$.  Then, after possibly replacing it with its mirror, $K$ is an instanton L-space knot, hence fibered and strongly quasipositive, with Seifert genus $g=g(K)$ at least 3 and signature $\sigma(K) \in \{-4,0,4\}$.  Moreover, its branched double cover $Y = \dcover(K)$ bounds a simply connected Stein domain $X$ with
\[ b^+_2(X) = g + \tfrac{1}{2}\sigma(K) \geq 1. \]
\end{lemma}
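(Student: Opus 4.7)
The plan is to pin down the instanton knot homology of $K$ to the ``L-space profile'' by combining Proposition~\ref{prop:odd-prime-determinant} with Theorem~\ref{thm:simple-alexander}, then invoke Li and Liang's theorem to identify $K$ as an instanton L-space knot, and finally use the Loi--Piergallini construction of Stein fillings from quasipositive surfaces to build $X$.

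First, Proposition~\ref{prop:odd-prime-determinant} with $p = 5$ gives $\dim_{\Q} \KHI(K) = 5$ and $\sigma(K) \in \{-4, 0, 4\}$ (using $|\sigma(K)| \leq 4$, $\sigma(K)$ even, and $\sigma(K) \equiv 0 \pmod 4$). By Theorem~\ref{thm:simple-alexander}, the dimensions $d_i := \dim_{\Q} \KHI(K, i) = |a_i|$ are symmetric in $i$, sum to $5$, and satisfy $d_g \geq 1$, where $g = g(K)$. Imposing the constraint $\Delta_K(1) = 1$ on the resulting symmetric partitions of $5$ leaves only three possible dimension profiles: the \emph{L-space profile} $d_{\pm g} = d_{\pm(g-1)} = d_0 = 1$; a \emph{central profile} $d_{\pm g} = 1$, $d_0 = 3$, forced to have $g$ odd; or a \emph{sparse profile} $d_{\pm g} = d_{\pm k} = d_0 = 1$ for some $0 < k < g-1$ with $k \not\equiv g \pmod 2$. (A profile with $d_g \geq 2$ is excluded directly by $\Delta_K(1) = 1$.) Note that the sparse profile requires $g \geq 3$, and for $g = 2$ only the L-space profile is compatible.

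I would then rule out the central and sparse profiles. For $g = 1$ the central profile forces $\Delta_K(t) = -t + 3 - t^{-1}$; together with $d_g = 1$, Theorem~\ref{thm:simple-alexander} gives fiberedness, and as the only genus-one fibered knot with this Alexander polynomial is the figure eight, we contradict the hypothesis. For $g \geq 3$ in either the central or sparse profile, the Alexander polynomial has gaps incompatible with the L-space structure, and the main obstacle in the argument is to show that no $\SU(2)$-simple knot of determinant $5$ realizes such a profile; the strategy is to combine the nondegenerate count of irreducible binary dihedral representations underlying $\dim I^{\natural}(K) = 5$ with Li and Liang's techniques to obstruct such configurations. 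Once only the L-space profile remains, Li and Liang's theorem identifies $K$, after possibly replacing it with its mirror, as an instanton L-space knot of genus $g$. The small-genus cases are then excluded: $g = 1$ forces $K = T_{\pm 2, 3}$ with $\det = 3 \neq 5$, while $g = 2$ forces $K = T_{\pm 2, 5}$ by the Farber--Reinoso--Wang theorem, which is excluded by hypothesis. Hence $g \geq 3$, and by \cite[Theorem~1.15]{bs-lspace} the knot $K$ is fibered and strongly quasipositive.

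For the Stein filling, I would use Rudolph's construction: since $K$ is strongly quasipositive with genus $g$, it bounds a quasipositive Seifert surface $F \subset S^3$. Pushing $F$ into $D^4$ produces a properly embedded quasipositive surface, and by the Loi--Piergallini theorem characterizing Stein domains as double branched covers of $D^4$ over quasipositive surfaces, the double branched cover $X := \dcover(D^4, F)$ is a Stein domain with $\partial X = \dcover(K) = Y$. Its second homology $H_2(X; \Z) \cong H_1(F; \Z) \cong \Z^{2g}$ carries the symmetrized Seifert form $A + A^T$ as intersection form, so $b_2(X) = 2g$, $\sigma(X) = \sigma(K)$, and $X$ is simply connected. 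Therefore
\[ b^+_2(X) = g + \tfrac{1}{2}\sigma(K), \]
which is at least $1$ because $g \geq 3$ and $\sigma(K) \geq -4$, completing the sketch.
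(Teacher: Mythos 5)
Your overall skeleton matches the paper's: Proposition~\ref{prop:odd-prime-determinant} pins down $\dim_\Q \KHI(K)=5$ and $\sigma(K)\in\{-4,0,4\}$, Theorem~\ref{thm:simple-alexander} converts this into constraints on the Alexander decomposition, Li--Liang and Farber--Reinoso--Wang identify $K$ and force $g\geq 3$, and the Stein domain is the branched double cover of a pushed-in quasipositive Seifert surface (your Loi--Piergallini/Rudolph citation is the same construction the paper takes from Boileau--Boyer--Gordon). But there is a genuine gap at the step you yourself flag as ``the main obstacle'': eliminating the central profile ($d_{\pm g}=1$, $d_0=3$) and the sparse profile ($d_{\pm g}=d_{\pm k}=d_0=1$ with $k<g-1$) for $g\geq 2$. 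Your proposed strategy --- combining the nondegenerate binary dihedral count with ``Li and Liang's techniques'' --- is not an argument; Li--Liang's theorem only applies once the L-space profile is already established, and nothing in the representation count distinguishes these profiles, since all three are consistent with $\dim_\Q I^\natural(K)=5$.

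The missing idea is fiberedness detection in the next-to-top grading. Since Theorem~\ref{thm:simple-alexander} gives $|a_g|=\dim_\Q\KHI(K,g)$ with equality to $1$ if and only if $K$ is fibered, the exclusion of $d_g\geq 2$ (which you do correctly via $\Delta_K(1)=1$) already shows $K$ is \emph{fibered}; after ruling out genus $1$ (which would force $K$ to be the figure eight), the paper then invokes \cite[Theorem~1.7]{bs-trefoil}, which says that a fibered knot has $\KHI(K,g-1)\neq 0$. This single fact kills both the central and sparse profiles at once, because they have $\KHI(K,g-1)=0$, and leaves only the L-space profile $\Q_g\oplus\Q_{g-1}\oplus\Q_0\oplus\Q_{1-g}\oplus\Q_{-g}$. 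Without this input your case analysis cannot be closed. The remainder of your argument (mirroring to get a positive instanton L-space surgery, strong quasipositivity from \cite[Theorem~1.15]{bs-lspace}, $g\geq 3$ from \cite{frw-cinquefoil}, and $b_2^+(X)=g+\tfrac{1}{2}\sigma(K)\geq 1$ from $b_2(X)=2g$, $\sigma(X)=\sigma(K)$, and $\sigma(K)\geq -4$) is correct and agrees with the paper.
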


\begin{proof}
Proposition~\ref{prop:odd-prime-determinant} says that $|\sigma(K)| \leq 4$ and $\sigma(K) \equiv \det(K)-1 \equiv 0 \pmod{4}$, so $\sigma(K)$ must be either $0$ or $\pm4$; and that $\dim_\Q \KHI(K) = 5$.  We will use the Alexander decomposition of $\KHI(K)$ to understand more about $K$.

First, we suppose that $\dim_\Q \KHI(K,g) > 1$.  Then since $\dim_\Q \KHI(K) = 5$ we must have
\[ \KHI(K;\Q) \cong \Q^2_{g} \oplus \Q^{\vphantom{2}}_0 \oplus \Q^2_{-g}, \]
where the subscripts denote the Alexander grading.  But then Theorem~\ref{thm:simple-alexander} and the fact that $\frac{1}{2}\sigma(K)$ is even would tell us that
\[ \Delta_K(t) = 2(-t)^g + 1 + 2(-t)^{-g}, \]
which is impossible since $\Delta_K(1)=1$.  It follows that $\dim_\Q \KHI(K,g)=1$ and that $K$ is fibered.  Moreover, if $K$ were fibered of genus 1 then it would be the figure eight (the trefoils having determinant 3), but we have excluded this possibility.  Therefore $K$ must have genus $g \geq 2$.

Now since $K$ is fibered we can apply \cite[Theorem~1.7]{bs-trefoil} to show that $\KHI(K,g-1) \neq 0$, and since $\KHI(K)$ has total dimension 5, we have
\[ \KHI(K;\Q) \cong \Q_g \oplus \Q_{g-1} \oplus \Q_0 \oplus \Q_{1-g} \oplus \Q_{-g}. \]
According to work of Li and Liang \cite[Theorem~1.4]{li-liang}, this tells us that $K$ is an instanton L-space knot, up to mirroring; we replace $K$ with its mirror as needed to guarantee that it has a positive instanton L-space surgery.  Then $K$ is also strongly quasipositive \cite[Theorem~1.15]{bs-lspace}, as claimed.   Moreover, Farber, Reinoso, and Wang \cite[Corollary~1.4]{frw-cinquefoil} proved (building on a partial characterization in \cite[\S2]{blsy}) that $T_{2,5}$ is the only genus-2 instanton L-space knot, and we have assumed that $K$ is not $T_{2,5}$, so in fact $g \geq 3$.

Boileau, Boyer, and Gordon \cite[Proposition~6.1]{boileau-boyer-gordon-1} proved that since $K$ is strongly quasipositive, if its branched double cover is a Heegaard Floer L-space, then $|\sigma(K)| = 2g(K)$.  We adapt part of their argument to the instanton setting: they push a genus-$g$ Seifert surface $F$ for $K \subset S^3$ into the 4-ball so that $F$ is the intersection of a smooth complex curve with that ball, and then observe that $X=\dcover(F)$ is a Stein manifold \cite{hkp,rudolph-clinks} with boundary $\dcover(K)$.  Then $X$ is simply connected \cite[\S3.1.1]{boileau-boyer-gordon-1}, and has $b_3(X)=0$ because it is Stein, and this implies that $b_2(X)=2g$ by \cite[Remark~3.7]{boileau-boyer-gordon-1}.  Its signature $\sigma(X)$ is equal to $\sigma(K)$ \cite[Lemma~1.1]{kauffman-taylor}, so we determine that $X$ gives a Stein filling of $\dcover(K)$ satisfying
\begin{align*}
b_2^+(X) &= g+\tfrac{1}{2}\sigma(K), &
b_2^-(X) &= g-\tfrac{1}{2}\sigma(K),
\end{align*}
as claimed.  Moreover, since $g \geq 3$ and $\sigma(K) \geq -4$ we conclude that $b_2^+(X)$ is positive.
\end{proof}

We can now apply Proposition~\ref{prop:dcover-map} to complete the proof of Theorem~\ref{thm:heegaard-genus-2}.

\begin{proof}[Proof of Theorem~\ref{thm:heegaard-genus-2}]
We have already proved the cases where $H_1(Y;\Z)$ has order $1$ (in \S\ref{sec:zhs3}) or $3$, so now we suppose that $Y$ has Heegaard genus 2, that $|H_1(Y;\Z)|=5$, and that $Y$ is $\SU(2)$-abelian.   Then Proposition~\ref{prop:dcover-map} says that $Y$ is the branched double cover of an $\SU(2)$-simple 3-bridge knot $K$, with $\det(K)=5$.  Since $K$ is not a 2-bridge knot, it is neither the figure eight nor the torus knot $T_{\pm2,5}$.

We now apply Lemma~\ref{lem:su2-simple-det-5}, replacing $K$ with its mirror (and hence $Y$ with $-Y$) if necessary to ensure that $K$ is an instanton L-space knot.  The conclusion of this lemma says that $Y$ has a simply connected Stein filling $(X,J)$ satisfying $b^+_2(X) \geq 1$.  But Lemma~\ref{lem:cyclically-finite} guarantees that $\pi_1(Y)$ is cyclically finite, so this contradicts Proposition~\ref{prop:fillings}, and thus such $Y$ cannot exist after all.
\end{proof}

\begin{remark}
Unlike in the cases $\det(K)=1$ and $\det(K)=3$, we have not shown that there are no $\SU(2)$-simple knots of determinant $5$ and bridge index greater than $2$.  What we have shown is that if such a knot $K$ exists, then its branched double cover $Y = \dcover(K)$ cannot be $\SU(2)$-abelian.  We saw in the proof of Proposition~\ref{prop:dcover-map} that this cannot happen if $K$ is a 3-bridge knot, but $K$ could still have bridge index $4$ or greater, as per Remark~\ref{rem:abelian-implies-simple}.
\end{remark}

\section{Meridian-traceless representation varieties and the trefoils} \label{sec:trefoil}

Given a knot $K \subset S^3$, we define the representation variety
\[ \sR(K,i) = \left\{ \rho: \pi_1(S^3 \setminus K) \to \SU(2) \mid \rho(\mu) = i \right\}, \]
where $\mu$ is a fixed meridian.  This carries an action of
\[ U(1) / (\Z/2\Z) = \{e^{i\theta} \} / \{\pm1\} \]
by conjugation, which fixes the unique reducible representation in $\sR(K,i)$ and partitions the irreducible representations into $S^1$ orbits.  In this section, we prove Theorem~\ref{thm:trefoil-recognition-main}, which asserts that there is a single $S^1$ orbit of irreducible representations if and only if $K$ is a trefoil.

\begin{theorem} \label{thm:trefoil-recognition}
If $\sR(K,i) \cong \{ \ast \} \sqcup S^1$, then $K$ is a trefoil.
\end{theorem}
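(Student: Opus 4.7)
The plan is to reduce Theorem~\ref{thm:trefoil-recognition} to the classification of $\SU(2)$-simple knots of determinant $3$ established in Lemma~\ref{lem:su2-simple-det-3}. First I would interpret the hypothesis representation-theoretically: the group $U(1)/\{\pm 1\} \cong S^1$ acts on $\sR(K,i)$ by conjugation, fixing the unique reducible; since the $\SU(2)$-stabilizer of any irreducible representation is the center $\{\pm 1\}$, this $S^1$ acts freely on the irreducible locus. Because every meridian-traceless representation can be conjugated to send $\mu$ to $i$, the hypothesis $\sR(K,i) \cong \{\ast\} \sqcup S^1$ is equivalent to the statement that $\pi_1(S^3\setminus K)$ admits exactly one $\SU(2)$-conjugacy class of irreducible meridian-traceless representations. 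Klassen's enumeration \cite[Theorem~10]{klassen} counts the non-abelian binary dihedral such classes as $\tfrac{1}{2}(\det(K)-1)$, so this number is at most $1$, forcing $\det(K) \in \{1,3\}$.

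In the case $\det(K) = 3$, the unique non-abelian binary dihedral class must coincide with our unique irreducible meridian-traceless class. Together with the abelian reducible, which is automatically binary dihedral, every meridian-traceless representation of $\pi_1(S^3\setminus K)$ is then conjugate to a binary dihedral one, so $K$ is $\SU(2)$-simple. Lemma~\ref{lem:su2-simple-det-3} immediately gives that $K$ is a trefoil.

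The main obstacle is excluding the case $\det(K) = 1$, where no binary dihedral representation exists so the unique irreducible class is non-binary-dihedral and $K$ is not $\SU(2)$-simple, leaving the earlier classification inapplicable. Here I would aim for a contradiction via gauge-theoretic lower bounds on $\dim_\Q \KHI(K) = \dim_\Q I^\natural(K)$: the smooth $S^1$-family of irreducibles should contribute $\dim H_\ast(S^1) = 2$ in a Morse--Bott-type computation and the reducible contributes $1$, forcing $\dim_\Q \KHI(K) \geq 3$. Under Morse--Bott the Casson--Lin signed count $h(K) = \tfrac{1}{2}\sigma(K)$ equals $\chi(S^1) = 0$, giving $\sigma(K) = 0$ and hence $\chi(\KHI(K)) = \Delta_K(-1) = \pm 1$. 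Using Alexander symmetry, the constraint $\Delta_K(1) = 1$, and the nonvanishing of $\KHI(K, g(K))$ (with $\dim = 1$ iff $K$ is fibered), I would try to show that the only compatible Alexander polynomial would correspond to a genus-$1$ fibered knot; a direct arithmetic check then rules out $\det(K) = 1$ and $g(K) = 1$ simultaneously. Making this Morse--Bott contribution precise without the stronger nondegeneracy used in prior work \cite[Corollary~7.20]{km-excision} and \cite[Theorem~1.10]{bs-trefoil} is the technical heart of the argument.
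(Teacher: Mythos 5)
Your reduction in the case $\det(K)=3$ is correct and matches the first paragraph of the paper's proof: one $S^1$ orbit means one conjugacy class of irreducible meridian-traceless representations, Klassen's count of $\tfrac{1}{2}(\det(K)-1)$ binary dihedral classes forces that class to be binary dihedral when $\det(K)=3$, hence $K$ is $\SU(2)$-simple and Lemma~\ref{lem:su2-simple-det-3} finishes. The genuine gap is your treatment of the case $\det(K)=1$, and you have correctly identified it as the technical heart but not filled it. Your plan is to extract $\dim_\Q \KHI(K)\geq 3$ and $h(K)=\chi(S^1)=0$ from a Morse--Bott computation over the $S^1$ of irreducibles. But the hypothesis $\sR(K,i)\cong\{\ast\}\sqcup S^1$ is only a statement about the underlying space of the representation variety; it does not make the $S^1$ a nondegenerate (Morse--Bott) critical manifold, so neither the contribution of $\dim H_*(S^1)=2$ to the Floer complex nor the identification of the Casson--Lin signed count with $\chi(S^1)$ is justified. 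This is exactly the nondegeneracy hypothesis that the earlier partial results \cite[Corollary~7.20]{km-excision} and \cite[Theorem~1.10]{bs-trefoil} assumed and that Theorem~\ref{thm:trefoil-recognition-main} is designed to remove, so an argument that reinstates it does not prove the theorem.

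The idea you are missing is that $\SU(2)$-simplicity can be forced directly from the hypothesis, which disposes of $\det(K)=1$ for free. The paper fixes the nontrivial central character $\chi:\pi_1(S^3\setminus K)\to\{\pm1\}$ and considers the involution $\rho\mapsto\rho'$, where $\rho'(\gamma)=j\cdot\chi(\gamma)\rho(\gamma)\cdot j^{-1}$; one checks $\rho'\in\sR(K,i)$. By Nagasato--Yamaguchi, the irreducible characters fixed by this involution are exactly the metabelian, i.e.\ binary dihedral, ones, and Klassen's conjugation facts upgrade equality of characters to $\SU(2)$-conjugacy. Hence if $K$ admitted an irreducible meridian-traceless $\rho$ that is not binary dihedral, then $\rho$ and $\rho'$ would lie in distinct $S^1$ orbits of $\sR(K,i)$, contradicting the hypothesis of a single orbit. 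So $K$ is $\SU(2)$-simple, the unique irreducible class is binary dihedral, Klassen's count gives $\tfrac{1}{2}(\det(K)-1)=1$, i.e.\ $\det(K)=3$ (in particular $\det(K)=1$ is impossible, since then there would be no binary dihedral class at all), and the argument concludes as in your $\det(K)=3$ branch. Without this involution step, or some substitute that rules out a unique non-binary-dihedral orbit when $\det(K)=1$, your proof is incomplete.
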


\begin{proof}
Suppose for now that $K$ is $\SU(2)$-simple, so that every irreducible, meridian-traceless representation $\rho: \pi_1(S^3 \setminus K) \to \SU(2)$ has binary dihedral image.  We recall that according to Klassen \cite[Theorem~10]{klassen}, there are exactly $\frac{1}{2}(\det(K)-1)$ such $\rho$ up to conjugacy.  But by hypothesis there is a unique conjugacy class of such $\rho$, so now $K$ is an $\SU(2)$-simple knot with $\det(K) = 3$, and Lemma~\ref{lem:su2-simple-det-3} says that $K$ must therefore be a trefoil.

We now show that $K$ is indeed $\SU(2)$-simple.  Fixing the nontrivial central character
\[ \chi: \pi_1(S^3 \setminus K) \twoheadrightarrow H_1(S^3\setminus K) \cong \Z \to \{\pm 1\}, \]
we can take any representation in $\sR(K,i)$, say
\[ \rho: \pi_1(S^3 \setminus K) \to \SU(2), \]
and consider the representation
\[ \rho'(\gamma) = j \cdot \chi(\gamma) \rho(\gamma) \cdot j^{-1}. \]
We note that $\rho' \in \sR(K,i)$, since $\chi(\mu) = -1$ and $\rho(\mu)=i$ imply that
\[ \rho'(\mu) = j \cdot (-i) \cdot j^{-1} = i. \]
The operation $\rho \mapsto \rho'$ defines an involution not just on $\sR(K,i)$ but on the entire meridian-traceless $\SL_2(\C)$ character variety of $K$, and Nagasato and Yamaguchi \cite[Proposition~3]{nagasato-yamaguchi} proved that the irreducible characters fixed by this involution are precisely the metabelian ones, which are the same as the binary dihedral characters \cite[Proposition~2]{nagasato-yamaguchi}.

If $\rho$ is conjugate to $\rho'$ then they have the same characters, so the above says that $\tr \rho$ is equal to the character of an irreducible binary dihedral representation $\rho_0$, which we can take to have image in $\SU(2)$.  Facts (1) and (2) in the proof of \cite[Proposition~15]{klassen} say that $\rho$ and $\rho_0$ are conjugate in $\SL_2(\C)$, and then that they are conjugate by an element of $\SU(2)$.  Thus $\rho$ can only be conjugate to $\rho'$ if $\rho$ is itself a binary dihedral representation.  But if $K$ is not $\SU(2)$-simple then we can choose some irreducible $\rho \in \sR(K,i)$ which is not binary dihedral, and the corresponding $\rho' \in \sR(K,i)$ will not be conjugate to $\rho$.  In this case $\sR(K,i)$ has at least two distinct $S^1$ orbits, which contradicts the hypothesis that $\sR(K,i) \cong \{\ast\} \sqcup S^1$.  We conclude that $K$ must be $\SU(2)$-simple after all, and hence $K$ is a trefoil as argued above.
\end{proof}

\section{Homology of order 2} \label{sec:order-2}

In this section we will prove Proposition~\ref{prop:su2-simple-det-2}, which classifies $\SU(2)$-simple links of determinant $2$.  We will then use this to prove Theorem~\ref{thm:order-2}, constructing non-abelian $\SO(3)$-representations of arbitrary 3-manifolds $Y$ with Heegaard genus $2$ and first homology of order $2$.

\begin{lemma} \label{lem:det-linking}
If $L \subset S^3$ is a link of $\ell \geq 1$ components, then $\det(L)$ is a multiple of $2^{\ell-1}$.  If $\ell=1$ then $\det(L)$ is odd, while if $\ell=2$ and we write $L = L_1 \cup L_2$, then
\[ \det(L) \equiv 2 \lk(L_1,L_2) \pmod{4}. \]
\end{lemma}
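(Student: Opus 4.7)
The plan is to read off the first claim and the $\ell=1$ case directly from the homology computation \eqref{eq:homology-bdc} of the branched double cover, and to handle the $\ell=2$ refinement with a short computation using the Conway polynomial.

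For the general divisibility by $2^{\ell-1}$ and the $\ell=1$ statement, I would invoke \eqref{eq:homology-bdc}, which gives $\dim_{\F_2} H_1(\dcover(L); \F_2) = \ell - 1$. When $\det(L) \ne 0$, the group $H_1(\dcover(L); \Z)$ is finite of order $\det(L)$, and its $\F_2$-rank equals the number of its cyclic summands of even order; those $\ell-1$ even-order summands each contribute a factor of $2$, giving $2^{\ell-1} \mid \det(L)$. When $\det(L) = 0$ the divisibility is vacuous. In the case $\ell = 1$ there are no even summands at all, so $\det(K)$ is odd.

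For the $\ell=2$ claim I would appeal to the Conway polynomial $\nabla_L(z) \in \Z[z]$, characterized by the skein relation $\nabla_{L_+} - \nabla_{L_-} = z\,\nabla_{L_0}$, the normalization $\nabla_U(z) = 1$ for the unknot, and the vanishing $\nabla_L(z) = 0$ for split links, together with the identity $\Delta_L(t) = \nabla_L(t^{1/2} - t^{-1/2})$. A two-component link has $\nabla_L(z) = z\bigl(a_0 + a_2 z^2 + a_4 z^4 + \cdots\bigr)$, and I would first verify the classical identity $a_0 = \lk(L_1, L_2)$ by a skein induction: a crossing between $L_1$ and $L_2$ changes both $a_0$ and $\lk$ by $\pm 1$ (the oriented resolution is a knot, whose Conway polynomial begins at $1$); a self-crossing changes neither (the oriented resolution is a $3$-component link, so $\nabla_{L_0}$ is divisible by $z^2$); and both $a_0$ and $\lk$ vanish for the two-component unlink.

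With $a_0 = \lk(L_1, L_2)$ in hand, substituting $t = -1$ and $t^{1/2} = i$ gives $z = 2i$ and
\[
\Delta_L(-1) \;=\; 2i\bigl(a_0 - 4 a_2 + 16 a_4 - \cdots\bigr) \;=\; 2i\,(a_0 + 4m)
\]
for some $m \in \Z$, so $\det(L) = |\Delta_L(-1)| = 2|a_0 + 4m|$. Since $-2 \equiv 2 \pmod 4$, the elementary identity $2|n| \equiv 2n \pmod{4}$ holds for every $n \in \Z$, and therefore
\[
\det(L) \;\equiv\; 2(a_0 + 4m) \;\equiv\; 2\lk(L_1, L_2) \pmod{4},
\]
as desired. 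The only real obstacle in the argument is the skein verification of $a_0 = \lk(L_1, L_2)$, but this is classical and elementary, so once it is accepted the rest of the argument is pure bookkeeping with the Conway polynomial.
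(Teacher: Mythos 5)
Your proof is correct. The first two claims (divisibility by $2^{\ell-1}$ and oddness for knots) are established exactly as in the paper: both arguments run the mod-$2$ homology computation \eqref{eq:homology-bdc} through the universal coefficient theorem and count the even-order cyclic summands of $H_1(\dcover(L);\Z)$. For the $\ell=2$ congruence you take a genuinely different route. The paper cites Hosokawa--Kinoshita for $\det(L) = 2|\Delta_L(-1,-1)|$ in terms of the \emph{multivariable} Alexander polynomial and then Torres for $\Delta_L(1,1) = \pm\lk(L_1,L_2)$, reducing mod $2$; you instead work with the one-variable Conway normalization, using the classical identity $a_0 = \lk(L_1,L_2)$ for the lowest coefficient of $\nabla_L(z) = z(a_0 + a_2z^2 + \cdots)$ and evaluating at $z = 2i$. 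The two arguments are morally parallel (your skein-theoretic identity $a_0 = \lk(L_1,L_2)$ is the one-variable shadow of the Torres condition), but yours is more self-contained: the skein induction you sketch is complete and elementary, whereas the paper outsources the key facts to two references. The small bookkeeping points you flag --- that $\nabla_{L_0}$ is divisible by $z^2$ when $L_0$ has three components, and that $2|n| \equiv 2n \pmod 4$ for all $n \in \Z$, which disposes of the sign ambiguity in $\det(L) = |\nabla_L(2i)|$ --- are both handled correctly, so there is no gap.
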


\begin{proof}
Let $Y = \dcover(L)$.  We will assume $\det(L) \neq 0$ for now, since otherwise $2^{\ell-1}$ divides it anyway.  Then $H_1(Y;\Z)$ is finite of order $\det(L)$, and according to the universal coefficient theorem and equation \eqref{eq:homology-bdc}, we have
\[ H_1(Y;\Z) \otimes (\Z/2\Z) \cong H_1(Y;\Z/2\Z) \cong (\Z/2\Z)^{\ell-1}. \]
Writing $H_1(Y;\Z) \cong \bigoplus_i \Z/d_i\Z$ as a sum of cyclic groups, so that $\det(L) = \prod_i d_i$, each $\Z/d_i\Z$ summand contributes a $\Z/2\Z$ summand to the tensor product on the left if $d_i$ is even, and the trivial group otherwise.  Thus there must be exactly $\ell-1$ even values of $d_i$, and it follows that $2^{\ell-1}$ divides $\prod_i d_i = \det(L)$.  If $\ell=1$ then all of the $d_i$ are odd, so their product $\det(L)$ is odd as well.

Now suppose that $\ell=2$ and that $\det(L)$ is arbitrary.  In this case it equals
\[ |H_1(Y;\Z)| = 2\Delta_L(-1,-1), \]
where $\Delta_L$ is the multivariable Alexander polynomial of $L = L_1 \cup L_2$, as shown by Hosokawa and Kinoshita \cite{hosokawa-kinoshita}.  On the other hand, Torres \cite{torres} proved that the linking number $\lk(L_1,L_2)$ satisfies
\[ \Delta_L(1,1) = \pm \lk(L_1,L_2), \]
so in particular
\[ \Delta_L(-1,-1) \equiv \Delta_L(1,1) \equiv \lk(L_1,L_2) \pmod{2}. \]
Thus $\det(L) \equiv 2\lk(L_1,L_2) \pmod{4}$ as claimed.
\end{proof}

In what follows we will let
\[ D = \{e^{i\theta}\} \cup \{e^{i\theta}j\} \subset \SU(2) \]
denote the binary dihedral subgroup of $\SU(2)$.

\begin{lemma} \label{lem:det-2-bd-image}
Let $L \subset S^3$ be a link with $\det(L) \equiv 2 \pmod{4}$, and fix a meridian-traceless representation
\[ \rho: \pi_1(S^3 \setminus L) \to \SU(2) \]
with non-abelian image in the binary dihedral group $D$.  Then $\rho$ sends every meridian of $L$ to an element of the form $e^{i\theta}j$.
\end{lemma}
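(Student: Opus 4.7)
The plan is to factor $\rho$ through the quotient $D \twoheadrightarrow D/\{e^{i\theta}\} \cong \Z/2\Z$ and call the composition $\bar\rho$. Since $\bar\rho$ is a homomorphism to an abelian group, it is determined by the values $\epsilon_k := \bar\rho(\mu_k) \in \Z/2\Z$ on a meridian of each component $L_k$, and the conclusion of the lemma amounts to showing that every $\epsilon_k$ equals $1$. The assumption that $\rho$ has non-abelian image immediately excludes the possibility $\epsilon_1 = \dots = \epsilon_\ell = 0$, since otherwise the image would lie in the abelian circle $\{e^{i\theta}\}$.

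I would first invoke Lemma~\ref{lem:det-linking} to control the number of components: if $\ell = 1$ then $\det(L)$ is odd, and if $\ell \geq 3$ then $2^{\ell-1} \geq 4$ divides $\det(L)$, so neither case is compatible with $\det(L) \equiv 2 \pmod 4$. Hence $\ell = 2$, and the same lemma yields $\lk(L_1,L_2) \equiv 1 \pmod 2$. The only remaining configuration I need to rule out is, after relabeling, $\epsilon_1 = 0$ and $\epsilon_2 = 1$: every meridian of $L_1$ would then land in the traceless part $\{\pm i\}$ of $\{e^{i\theta}\}$, while every meridian of $L_2$ would land in the coset $\{e^{i\theta}j\}$.

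To derive a contradiction in that case, I would fix a diagram of $L$ and assign to each arc $\alpha$ of $L_1$ a color $c(\alpha) \in \{\pm 1\}$ by $\rho(\mu_\alpha) = c(\alpha)\cdot i$. A short quaternion computation shows that conjugation of $\pm i$ by any element of $\{e^{i\theta}\}$ is trivial, whereas conjugation by any element of $\{e^{i\theta}j\}$ negates it. Applying this to the Wirtinger relations, one finds that $c$ is preserved across self-crossings of $L_1$ and across crossings where $L_1$ passes over $L_2$, and is flipped exactly at crossings where $L_2$ passes over $L_1$. For $c$ to be well-defined as one traverses $L_1$ once, the unsigned count of $L_2$-over-$L_1$ crossings must therefore be even; but that count has the same parity as its signed count, which equals $\lk(L_1,L_2)$, an odd integer. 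This contradiction forces $\epsilon_1 = \epsilon_2 = 1$, as required.

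The main obstacle I anticipate is simply the quaternion bookkeeping for the Wirtinger relations, i.e.\ verifying that color is preserved at $L_1$-self-crossings and at $L_1$-over-$L_2$ crossings and flipped only at $L_2$-over-$L_1$ crossings. Once that is in hand, the rest is a standard parity argument via $\lk(L_1,L_2)$, and the input from Lemma~\ref{lem:det-linking} is exactly what ensures this parity is odd.
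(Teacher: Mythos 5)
Your proposal is correct and follows essentially the same route as the paper: Lemma~\ref{lem:det-linking} forces $\ell=2$ with odd linking number, the meridians of each component all land in a single coset of $\{e^{i\theta}\}$ in $D$, and the remaining mixed case is ruled out by the same $\pm i$ coloring of the arcs of $L_1$ together with a parity comparison against $\lk(L_1,L_2)$. The only cosmetic difference is that you package the coset dichotomy via the quotient $D\to\Z/2\Z$ rather than via conjugacy of meridians, and you phrase the final step as comparing signed and unsigned crossing counts rather than switching crossings, but these are the same argument.
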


\begin{proof}
Lemma~\ref{lem:det-linking} tells us that $L$ must have exactly two components, since $\det(L)$ is neither odd nor a multiple of $4$, and that if we write $L = L_1 \cup L_2$ then $\lk(L_1,L_2)$ is odd.  We will consider the Wirtinger presentation associated to some diagram of $L$, which is generated by the meridians around each strand.  From this presentation we know that all of the meridians of $L_1$ are conjugate to each other, and likewise the meridians of $L_2$ are all mutually conjugate.

Both the normal subgroup $N = \{e^{i\theta}\}$ of $D$ and its coset $Nj = \{e^{i\theta}j \}$ are fixed by conjugation in $D$, so either $\rho$ sends all the meridians of $L_1$ to elements of $N$, or it sends them all to elements of $Nj$.  The same is true for $L_2$.  If $\rho$ sends both sets of meridians to $Nj$ then we are done; if instead both sets are sent to $N$ then all of $\Img(\rho)$ lies in $N$, contradicting our assumption that $\Img(\rho)$ is non-abelian.  Thus we will assume without loss of generality that the meridians of $L_1$ are sent to $N = \{e^{i\theta}\}$, while the meridians of $L_2$ are sent to $Nj = \{e^{i\theta}j\}$.

We now walk along $L_1$ in a circle and observe the sequence of values of $\rho$ at each strand.  Each of these values is either $i$ or $-i$, since these are the only purely imaginary elements of $N$.  If two such strands $\mu$ and $\mu'$ of $L_1$ meet at a crossing $c$, where the overcrossing strand has meridian $\mu_c$, then the Wirtinger presentation says that
\[ \rho(\mu') = \rho(\mu_c^{\pm1}) \cdot \rho(\mu) \cdot \rho(\mu_c^{\mp 1}) . \]
If $\mu_c$ belongs to $L_1$ then its image lies in $N$, hence commutes with $\rho(\mu) = \pm i$, and so $\rho(\mu') = \rho(\mu)$.  On the other hand, if $\mu_c$ belongs to $L_2$ then we can write $\rho(\mu_c^{\pm1}) = e^{i\theta}j$ for some $\theta$, and we have
\[ e^{i\theta}j \cdot (\pm i) \cdot (e^{i\theta}j)^{-1} = e^{i\theta}j \cdot (\pm i) \cdot (-je^{-i\theta}) = \mp i \]
so then $\rho(\mu') = -\rho(\mu)$.  In particular, as we travel along $L_1$, the value of $\rho$ at each meridian changes sign every time we cross under $L_2$, and it stays the same otherwise.

In order to assign $\pm i$ to each strand of $L_1$ in a consistent way, it follows that $L_1$ must cross under $L_2$ an even number of times.  Changing one of these undercrossings to an overcrossing changes $\lk(L_1,L_2)$ by $\pm1$; if instead we change all of them, then the total change must be even.  But doing this causes $L_1$ and $L_2$ to be unlinked from each other, since now $L_1$ always crosses over $L_2$.  Therefore $\lk(L_1,L_2)$ differs from $0$ by an even number.  In other words, the linking number $\lk(L_1,L_2)$ is even, but we saw at the beginning of the proof that it must also be odd and therefore we have a contradiction.
\end{proof}

\begin{lemma} \label{lem:dihedral-image-link}
Let $L \subset S^3$ be a link of at least two components, with $\det(L) \neq 0$, and let
\[ \rho: \pi_1(S^3 \setminus L) \to D \subset \SU(2) \]
be a meridian-traceless representation with non-abelian image.  Suppose that $\rho$ sends every meridian of $L$ to an element of the form $e^{i\theta} j$.  Then $\Img(\rho)$ is a finite group of order dividing $2\det(L)$.
\end{lemma}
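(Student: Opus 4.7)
My plan is to use the branched double cover $\dcover(L)$ to reduce the problem to bounding characters of the finite abelian group $H_1(\dcover(L); \Z)$, which has order $\det(L)$.

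Since every meridian of $L$ maps to $Nj$ under $\rho$, the composition $\pi_1(S^3 \setminus L) \to D \to D/N \cong \Z/2\Z$ is the sign homomorphism $\phi$ sending each meridian to $1$.  Let $\tilde X \to S^3 \setminus L$ be the associated double cover; it extends to $\dcover(L)$, and $\pi_1(\dcover(L))$ is the quotient of $\pi_1(\tilde X)$ by the normal closure of the meridians $\tilde\mu_k = \mu_k^2$ of the branch components.  Setting $A := \rho(\pi_1(\tilde X)) \subseteq N \cong S^1$, the image decomposes as $\Img(\rho) = A \sqcup A \cdot \rho(\mu_1)$, so $|\Img(\rho)| = 2|A|$, and it remains to bound $|A|$.

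The key identity is $\rho(\tilde\mu_k) = (e^{i\theta_k}j)^2 = -1$.  Composing $\rho|_{\pi_1(\tilde X)}$ with the squaring map $N \to N$, $z \mapsto z^2$, produces a homomorphism that sends each $\tilde\mu_k$ to $1$ and hence factors through $\pi_1(\dcover(L))$, and then (the target being abelian) through $H_1(\dcover(L); \Z)$.  The image of this squared homomorphism is $A^2 \cong A/\{\pm 1\}$, using that $-1 \in A$ via $\rho(\mu_1^2)$, so $|A|/2$ divides $\det(L)$; in particular $A$ is finite and so is $\Img(\rho)$.

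The main obstacle is to sharpen the resulting bound $|\Img(\rho)| \mid 4\det(L)$ to the claimed $|\Img(\rho)| \mid 2\det(L)$.  I would expect this improvement to come from exploiting the fact that for $\ell \geq 2$, Lemma~\ref{lem:det-linking} forces $2^{\ell-1}$ to divide $\det(L)$, so $H_1(\dcover(L); \Z)$ carries a distinguished order-$2$ element.  Concretely, I would attempt to show that the branch component classes $[\tilde L_k] \in H_1(\dcover(L); \Z)$ lie in the kernel of the induced character, for instance by computing $\rho(\tilde\lambda_k)^2$ using a Seifert framing that expresses each longitude $\lambda_k$ as a word in meridians whose total exponent is the linking number $\lk(L_k, L \setminus L_k)$; the odd-linking computations from Lemma~\ref{lem:det-2-bd-image} suggest the same kind of parity argument will be decisive.
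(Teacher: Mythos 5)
Your covering-space setup is correct as far as it goes: the image does decompose as $\Img(\rho) = A \sqcup A\cdot\rho(\mu_1)$ with $A = \rho(\pi_1(\tilde X)) \subseteq N$, the squared character does factor through $H_1(\dcover(L);\Z)$, and since $-1 = \rho(\mu_1^2) \in A$ you correctly obtain $|A|/2 \mid \det(L)$, hence finiteness and $|\Img(\rho)| \mid 4\det(L)$. But that is not the statement of the lemma, and the missing factor of $2$ is not a cosmetic loss: the only place this lemma is used is Proposition~\ref{prop:su2-simple-det-2}, where $\det(L)=2$ and the conclusion ``order dividing $2\det(L)=4$, hence abelian'' is exactly what produces the contradiction. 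With your bound the order could divide $8$, and the quaternion group $\{\pm 1,\pm i,\pm j,\pm ij\}$ is a non-abelian binary dihedral subgroup of order $8$ realizing every meridian as $e^{i\theta}j$, so the downstream argument would collapse. Your proposed repair---showing the branch-curve classes die under the induced character via a longitude/linking-number computation---is only a sketch; you neither prove that those classes are nonzero in $H_1(\dcover(L);\Z)$ nor that killing them cuts the image down by a factor of $2$, and note that the longitudes $\lambda_k$ need not even lift to loops in $\tilde X$ when $\lk(L_k, L\setminus L_k)$ is odd (as for the Hopf link), so the computation you describe does not obviously get off the ground.

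For comparison, the paper avoids this issue by working at the level of individual meridian images rather than the subgroup $A$: following Klassen, the Wirtinger relations become a linear system over $\R/2\pi\Z$ whose coefficient matrix is an Alexander matrix at $t=-1$, with determinant $\pm\det(L)$, so after normalizing one angle to $0$ every $\theta_r$ lies in $\frac{2\pi}{\Delta}\Z$ with $\Delta = \det(L)$. The evenness of $\Delta$ (your Lemma~\ref{lem:det-linking} input, used at exactly this point) then makes the set $\{e^{2\pi i m/\Delta}\}\cup\{e^{2\pi i m/\Delta}j\}$ closed under multiplication---the key identity being $(e^{2\pi i m/\Delta}j)^2 = -1 = e^{2\pi i(\Delta/2)/\Delta}$---so $\Img(\rho)$ sits inside a binary dihedral group of order exactly $2\Delta$. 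Your approach only controls the squares $e^{2i(\theta_a-\theta_b)}$, which pins the differences $\theta_a - \theta_b$ down to $\frac{2\pi}{2\Delta}\Z$ rather than $\frac{2\pi}{\Delta}\Z$; that is precisely where the factor of $2$ escapes. Unless you can supply a genuine argument that the induced character on $H_1(\dcover(L);\Z)$ has image of order dividing $\det(L)/2$, you should fall back on the Wirtinger/Alexander-matrix computation.
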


\begin{proof}
We argue exactly as in the proof of \cite[Theorem~10]{klassen}.  Suppose that we fix a diagram of $L$, and let $\mu_1,\dots,\mu_s$ denote the meridional generators of the corresponding Wirtinger presentation.  If we have a meridian-traceless, non-abelian representation
\[ \rho: \pi_1(S^3 \setminus L) \to D \subset \SU(2) \]
such that $\rho(\mu_r) = e^{i \theta_r} j$ for each $r=1,\dots,s$, then the relations in the Wirtinger presentation are equivalent to a system of linear equations, one for each of $r-1$ crossings of the diagram, which at a given crossing take the form
\[ \theta_a - 2\theta_b + \theta_c \equiv 0 \pmod{2\pi} \]
if the strand corresponding to $\mu_b$ crosses over the strands for $\mu_a$ and $\mu_c$.   We can conjugate $\rho$ by $e^{-i\theta_s/2} \in D$ to replace a solution $(\theta_1,\dots,\theta_{s-1},\theta_s)$ with $(\theta_1-\theta_s,\dots,\theta_{s-1}-\theta_s,0)$, using the relation
\[ e^{-i\theta_s/2} \cdot e^{i\alpha}j \cdot (e^{-i\theta_s/2})^{-1} = e^{i(\alpha-\theta_s)} j. \]
Thus up to conjugacy we are free to take $\theta_s = 0$, and then non-abelian representations will correspond to solutions $(\theta_1,\dots,\theta_{s-1})$ that are not identically zero modulo $\pi$.

The $(s-1)\times(s-1)$ integer matrix $A$ describing this system is obtained from an Alexander matrix by setting $t=-1$, so it has determinant
\[ \det(A) = \pm \Delta_L(-1) = \pm \det(L). \]
In particular, if we write $\Delta = \det(L)$ for readability then any non-zero solution satisfies
\[ (\theta_1,\dots,\theta_{s-1}) \in \frac{1}{\det(A)} (2\pi\Z)^{s-1} = \frac{2\pi}{\Delta} \Z^{s-1}, \]
and in this case the image $\rho(\mu_r)$ of each meridian lies in the set
\begin{equation} \label{eq:finite-binary-dihedral}
\left\{ e^{2\pi i m / \Delta} \,\middle\vert\, 0 \leq m < \Delta \right\} \cup \left\{ e^{2\pi i m/\Delta} \cdot j \,\middle\vert\, 0 \leq m < \Delta \right\}.
\end{equation}
Since $\Delta = \det(L)$ is even, this set is closed under multiplication, hence it is a binary dihedral subgroup of $D$, with order $2\det(L)$.  Then $\Img(\rho)$ is a subgroup of this, so its order divides $2\det(L)$, as promised.
\end{proof}

\begin{remark} \label{rem:dihedral-image-knot}
If $L$ were a knot, then the set \eqref{eq:finite-binary-dihedral} would not be closed under multiplication, because it does not contain $-1$ if $\Delta$ is odd.  But adjoining $-1$ turns it into a subgroup, so in this case we would conclude that $\Img(\rho)$ has order dividing $4\det(L)$ instead.
\end{remark}

\begin{proposition} \label{prop:su2-simple-det-2}
Let $L \subset S^3$ be an $\SU(2)$-simple link of determinant $2$.  Then $L$ is a Hopf link.
\end{proposition}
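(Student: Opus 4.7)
The plan is to use Lemmas~\ref{lem:det-linking}, \ref{lem:det-2-bd-image}, and \ref{lem:dihedral-image-link}, together with the $\SU(2)$-simple hypothesis, to force every meridian-traceless $\SU(2)$ representation of $\pi_1(S^3\setminus L)$ to have abelian image, and then to appeal to the recent work of Xie and Zhang \cite{xie-zhang-traceless} (or, as a fallback, to a branched double cover and bridge-index analysis) to identify $L$ as the Hopf link.

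First, by Lemma~\ref{lem:det-linking}, since $\det(L)=2$ is neither odd nor a multiple of $4$, $L$ must have exactly two components $L = L_1\cup L_2$ with $\lk(L_1,L_2)$ odd.  Next, suppose that $\rho:\pi_1(S^3\setminus L)\to D\subset\SU(2)$ is a non-abelian meridian-traceless representation with binary dihedral image.  Lemma~\ref{lem:det-2-bd-image} forces $\rho$ to send every meridian into $\{e^{i\theta}j\}$, and then Lemma~\ref{lem:dihedral-image-link} implies that $|\Img(\rho)|$ divides $2\det(L)=4$.  But every finite subgroup of $\SU(2)$ of order at most $4$ is cyclic --- $\SU(2)$ has $-1$ as its unique element of order $2$, so it contains no Klein four-group --- and is therefore abelian, contradicting the non-abelianness of $\Img(\rho)$.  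Combined with the $\SU(2)$-simple hypothesis, which ensures every irreducible meridian-traceless $\SU(2)$ representation is conjugate into $D$, this rules out all non-abelian meridian-traceless $\SU(2)$ representations of $\pi_1(S^3\setminus L)$.

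To conclude that $L$ is the Hopf link, I would pass to the branched double cover $Y=\dcover(L)$, which has $|H_1(Y)|=\det(L)=2$.  Meridian-traceless $\SU(2)$ representations of $\pi_1(S^3\setminus L)$ correspond via the orbifold fundamental group sequence $1 \to \pi_1(Y) \to \pi_1^{\mathrm{orb}}(S^3,L) \to \Z/2\Z \to 1$ to $\SO(3)$ representations of $\pi_1(Y)$, so the previous step makes $Y$ an $\SO(3)$-abelian rational homology sphere with $H_1(Y)\cong\Z/2\Z$.  I would then invoke the results of \cite{xie-zhang-traceless} on traceless representation varieties of links to complete the identification.

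The main obstacle is this final identification step: deducing from the abelianness of all meridian-traceless representations that $L$ is specifically the Hopf link, rather than some more complicated link whose traceless character variety happens to contain only abelian points.  If \cite{xie-zhang-traceless} does not directly supply the needed detection principle, a fallback strategy is to first pin down $Y\cong\mathbb{RP}^3$ --- using the $\SO(3)$-abelian hypothesis together with $|H_1(Y)|=2$ and whatever classification of such manifolds is available --- then to apply a Hodgson--Rubinstein-type argument to conclude that $L$ has bridge index $2$, and finally to use the classification of two-bridge links of given determinant to observe that the Hopf link is the unique such link of determinant $2$.
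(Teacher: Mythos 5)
Your first two paragraphs reproduce the paper's core computation exactly: Lemma~\ref{lem:det-linking} gives two components with odd linking number, Lemma~\ref{lem:det-2-bd-image} forces the meridians into $\{e^{i\theta}j\}$, and Lemma~\ref{lem:dihedral-image-link} bounds the image by order $4$, hence abelian, so no non-abelian binary dihedral meridian-traceless representation can exist. The detection principle you are worried about in the final step is precisely what \cite{xie-zhang-traceless} supplies: their Theorem~1.5 is an if-and-only-if statement, namely that $L$ admits an irreducible meridian-traceless $\SU(2)$ representation exactly when $L$ is not an unknot, a Hopf link, or a connected sum of Hopf links. Since a two-component connected sum of Hopf links is just a Hopf link, your hypothesis (two components, not a Hopf link) yields an irreducible traceless $\rho$, which by $\SU(2)$-simplicity would be conjugate into $D$ and hence killed by your order-$4$ argument --- a contradiction. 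This is exactly the paper's proof, run as a direct contradiction rather than your slightly roundabout phrasing; no detour through the branched double cover is needed.

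Your fallback, however, would not work and you should discard it. First, the proposition concerns arbitrary $\SU(2)$-simple links of determinant $2$, not just $3$-bridge ones, so $\dcover(L)$ need not have Heegaard genus $2$ and a Hodgson--Rubinstein bridge-index argument has no purchase. Second, and more seriously, there is no available classification of $\SO(3)$-abelian rational homology spheres with $|H_1|=2$ from which to conclude $Y\cong\mathbb{RP}^3$; the question of whether $\SU(2)$- or $\SO(3)$-abelian manifolds must be lens spaces is wide open, and indeed the introduction lists $\SU(2)$-abelian examples such as $\mathbb{RP}^3\#L(n/2,1)$ that are not lens spaces. Stick with the Xie--Zhang route.
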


\begin{proof}
Suppose that $L$ is not a Hopf link.  Xie and Zhang \cite[Theorem~1.5]{xie-zhang-traceless} proved that the following are equivalent:
\begin{itemize}
\item $L$ is not an unknot, a Hopf link, or a connected sum of Hopf links; and
\item there is an irreducible representation
\[ \rho: \pi_1(S^3 \setminus L) \to \SU(2) \]
sending every meridian of $L$ to a traceless element.
\end{itemize}
Now we know by Lemma~\ref{lem:det-linking} that $L$ must be a two-component link, and we have assumed that it is not a Hopf link, so such a representation $\rho$ must exist, with non-abelian image.

Suppose in addition that after possibly replacing $\rho$ with a conjugate, the image $\Img(\rho)$ lies in the binary dihedral group $D$.  Lemma~\ref{lem:det-2-bd-image} says that $\rho$ sends every meridian of $L$ to an element of the form $e^{i\theta} j$, so by Lemma~\ref{lem:dihedral-image-link} we know that $\Img(\rho)$ is a finite group of order dividing $2\det(L) = 4$.  But any such group must be abelian, so we have a contradiction.  Thus $\rho$ cannot be conjugate to a binary dihedral representation, and this proves that $L$ is not $\SU(2)$-simple.
\end{proof}

We are now ready to prove Theorem~\ref{thm:order-2}, asserting that if $Y$ has Heegaard genus $2$ and $H_1(Y)$ has order $2$ then there is a non-abelian representation $\pi_1(Y) \to \SO(3)$.  To do so, we recall how to construct a map
\begin{equation} \label{eq:mt-bdc-so3}
B: \left\{\begin{array}{c}\text{meridian-traceless representations}\\ \rho: \pi_1(S^3\setminus L)\to \SU(2)\end{array} \right\} \to \Hom\big( \pi_1(\dcover(L)), \SO(3) \big).
\end{equation}
We have a short exact sequence
\[ 1 \to \pi_1(X_2) \to \pi_1(S^3 \setminus L) \to \Z/2\Z \to 1, \]
where the map to $\Z/2\Z$ sends the meridian $\mu_i$ of each component $L_i$ (say $i=1,\dots,r$) to $1$, and where $X_2$ is the corresponding double cover of the complement of $L$.  Then we obtain $Y \cong \dcover(L)$ by Dehn filling $X_2$ along each lift of $\mu_i^2$, so that
\[ \pi_1(Y) \cong \frac{\pi_1(X_2)}{\llangle \mu_1^2, \mu_2^2, \dots, \mu_r^2 \rrangle}. \]
Now given a meridian-traceless $\rho$, we know that $\rho(\mu_i^2) = -1$ for each $i$, so
\[ \ad \rho: \pi_1(S^3 \setminus L) \to \SO(3) \]
sends each $\mu_i^2$ to the identity; thus its restriction to $\pi_1(X_2)$ induces a well-defined
\[ \tilde\rho: \pi_1(Y) \to \SO(3). \]
The map \eqref{eq:mt-bdc-so3} is defined by $B(\rho) = \tilde\rho$.

The following is part of the proof of \cite[Proposition~3.1]{zentner-simple}.

\begin{lemma}[\cite{zentner-simple}] \label{lem:rho-vs-B}
Suppose that $\det(L) \neq 0$, and let $\rho: \pi_1(S^3 \setminus L) \to \SU(2)$ be a meridian-traceless representation with non-abelian image.  If $B(\rho)$ has abelian image, then $\rho$ is conjugate to a representation with image in the binary dihedral subgroup
\[ D = \{e^{i\theta}\} \cup \{e^{i\theta}j\} \subset \SU(2). \]
\end{lemma}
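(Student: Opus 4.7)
The first step is to translate the hypothesis. Writing $G = \pi_1(S^3\setminus L)$ and $H = \pi_1(X_2)$, the kernel of the surjection $H \twoheadrightarrow \pi_1(Y)$ is normally generated by the squared meridians $\mu_i^2$, and $\ad\rho(\mu_i^2) = \ad(\rho(\mu_i)^2) = \ad(-1) = 1$ automatically since $\rho(\mu_i)$ is a purely imaginary unit quaternion. Hence $B(\rho)$ has abelian image in $\SO(3)$ if and only if $\ad\rho|_H$ does.

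The plan is then to invoke the elementary classification of abelian subgroups of $\SO(3)$: every such subgroup is either contained in a maximal torus $\SO(2)$, or equals a Klein four-group of half-turns about three mutually perpendicular axes. Pulling back under $\ad:\SU(2)\to\SO(3)$ identifies the corresponding subgroups of $\SU(2)$ as the maximal torus $T = \{e^{i\theta}\}$ and the quaternion group $Q_8 = \{\pm 1,\pm i,\pm j,\pm k\}$, both of which sit inside $D$. After conjugating $\rho$, we may therefore assume that either $\rho(H)\subset T$ or $\rho(H) = Q_8$.

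A preliminary reduction rules out the degenerate possibility $\rho(H)\subset\{\pm 1\}$: for any meridians $\mu_i,\mu_j$ the product $\mu_i\mu_j$ lies in $H$, forcing $\rho(\mu_i)\rho(\mu_j) \in \{\pm 1\}$; since each $\rho(\mu_i)$ is a purely imaginary unit, this forces all the $\rho(\mu_i)$ to agree up to sign, whence $\rho(G) \subset \{\pm 1,\pm\rho(\mu_1)\}$ is cyclic of order at most $4$, contradicting the non-abelian hypothesis.

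It remains to split into two cases. If $\rho(H)\subset T$, then by the previous paragraph $\rho(H)$ contains some $e^{i\alpha}$ with $\alpha\notin\pi\ZZ$, and any $\rho(g)$ with $g\in G\setminus H$ must normalize $T$, the unique maximal torus containing that $e^{i\alpha}$; hence $\rho(g)\in N_{\SU(2)}(T) = D$, giving $\rho(G)\subset D$. If instead $\rho(H) = Q_8$, then $|\rho(G):\rho(H)|$ divides $|G:H| = 2$, so $|\rho(G)|\leq 16$, and the classification of finite subgroups of $\SU(2)$ identifies the only subgroups of order $8$ or $16$ containing $Q_8$ as $Q_8$ itself and the binary dihedral group $Q_{16} = \langle e^{i\pi/4},j\rangle$, both of which are contained in a conjugate of $D$. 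I expect this final case to be the main obstacle, since one must either invoke the classification of finite subgroups of $\SU(2)$ or verify directly by a normalizer computation (inside the binary octahedral group $N_{\SU(2)}(Q_8)$) that every order-$16$ subgroup of $\SU(2)$ containing $Q_8$ is conjugate into $D$.
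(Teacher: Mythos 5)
Your argument is correct and complete, and it is essentially the standard one: the paper does not actually reproduce a proof here, deferring instead to the proof of Proposition~3.1 in Zentner's paper, which runs along the same lines as yours (reduce to $\ad\rho|_{\pi_1(X_2)}$ having abelian image, split according to whether that image lies in a circle subgroup or is a Klein four-group, and in the latter case identify $\rho(\pi_1(X_2))$ with $Q_8$ and use the structure of finite subgroups of $\SU(2)$). The one genuine difference is worth flagging: the paper remarks that the hypothesis $\det(L)\neq 0$ is needed to ensure that $\Img(B(\rho))$ is a \emph{finite} abelian subgroup of $\SO(3)$, after which one classifies finite abelian subgroups; your version instead classifies \emph{arbitrary} abelian subgroups of $\SO(3)$ (any such group containing a rotation of angle other than $0$ or $\pi$ lies in the centralizer of that rotation, a circle, and otherwise it is elementary abelian of order at most $4$), so finiteness --- and hence the hypothesis $\det(L)\neq 0$ --- is never used. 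That makes your argument marginally more general, at no extra cost. Your handling of the two endgames is also sound: in the torus case the preliminary reduction (ruling out $\rho(\pi_1(X_2))\subset\{\pm 1\}$ via products of meridians) is exactly what is needed to pin down $T$ as the unique maximal torus preserved by conjugation, and in the $Q_8$ case the fact that a finite subgroup of $\SU(2)$ has a unique involution forces any order-$16$ overgroup of $Q_8$ to be generalized quaternion, hence conjugate to $Q_{16}\subset D$; alternatively the normalizer computation inside the binary octahedral group works as you describe.
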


The condition $\det(L) \neq 0$ is needed to ensure that $\Img(B(\rho))$ is a \emph{finite} subgroup of $\SO(3)$.

\begin{proof}[Proof of Theorem~\ref{thm:order-2}]
Just as in the proof of Proposition~\ref{prop:dcover-map}, we can appeal to Birman and Hilden \cite[Theorem~1]{birman-hilden-branched} to write
\[ Y \cong \dcover(L) \]
for some 3-bridge link $L$ in $S^3$, with determinant $\det(L) = |H_1(Y;\Z)| = 2$ and hence (by Lemma~\ref{lem:det-linking}) with two components.  If $L$ were the Hopf link then $Y \cong \dcover(L)$ would be $\mathbb{RP}^3$, whose Heegaard genus is only $1$, so now Proposition~\ref{prop:su2-simple-det-2} says that $L$ is not $\SU(2)$-simple.  In particular, there is a meridian-traceless, non-abelian representation
\[ \rho: \pi_1(S^3 \setminus L) \to \SU(2) \]
whose image is not conjugate to a subgroup of the binary dihedral group $D \subset \SU(2)$.

We now use $\rho$ to construct a representation $\tilde\rho = B(\rho): \pi_1(Y) \to \SO(3)$ as in \eqref{eq:mt-bdc-so3}.  Since $\rho$ does not have binary dihedral image, Lemma~\ref{lem:rho-vs-B} says that $\tilde\rho$ cannot have abelian image and so this proves that $Y$ is not $\SO(3)$-abelian.
\end{proof}

\bibliographystyle{myalpha}
\bibliography{References}

\end{document}